\newtheorem{propo}{Proposition}[section]
\newtheorem{corol}[propo]{Corollary}
\newtheorem{theor}[propo]{Theorem}
\newtheorem{lemma}[propo]{Lemma}
\theoremstyle{definition}
\newtheorem{defin}[propo]{Definition}
\newtheorem{examp}[propo]{Example}
\theoremstyle{remark}
\newtheorem{remar}[propo]{Remark}
\numberwithin{equation}{section}
\newcommand{\al }{\alpha }
\newcommand{\cEs }{\mathcal{F}}
\newcommand{\cG }{\mathcal{G}}
\newcommand{\cA }{\mathcal{A}}
\newcommand{\cT }{\mathcal{T}}
\newcommand{\cAp }{\mathcal{A}^+}
\newcommand{\cC }{\mathcal{C}}
\newcommand{\Cm }{C}
\newcommand{\cm }{c}
\newcommand{\DG }{\mathbb{D}}
\newcommand{\id }{\mathrm{id}}
\newcommand{\NN }{\mathbb{N}}
\newcommand{\QQ }{\mathbb{Q}}
\newcommand{\ZZ }{\mathbb{Z}}
\newcommand{\rfl }{\rho }
\newcommand{\Ob }{\mathrm{Ob}}
\newcommand{\re }{^\mathrm{re}}
\newcommand{\rsC }{\mathcal{R}}
\newcommand{\rsCre }{\mathcal{R}^{re}}
\newcommand{\s }{\sigma }
\newcommand{\Wg }{\mathcal{W}}
\newcommand{\detz}{\mbox{\rm det}_z}
\DeclareMathOperator{\Aut}{Aut}
\DeclareMathOperator{\End}{End}
\DeclareMathOperator{\Hom}{Hom}
\DeclareMathOperator{\Prod}{Prod}
\DeclareMathOperator{\Gr}{Gr}
\DeclareMathOperator{\Mod}{mod}
\title[Reflection groupoids and Cluster algebras]
{Reflection groupoids of rank two and Cluster algebras of type $A$}
\author{M.~Cuntz}
\address{Michael Cuntz,
Fachbereich Mathematik,
Universit\"at Kaiserslau\-tern,
Postfach 3049,
D-67653 Kaiserslautern, Germany}
\email{cuntz@mathematik.uni-kl.de}
\author{I.~Heckenberger}
\thanks{I.H. is
supported by the German Research Foundation (DFG) via a Heisenberg
fellowship}
\address{Istv\'an Heckenberger, Philipps-Universit\"at Marburg,
Fachbereich Mathematik und Informatik,
Hans-Meerwein-Stra\ss e,
D-35032 Marburg, Germany}
\email{heckenberger@mathematik.uni-marburg.de}
\begin{document}

\begin{abstract}
We extend the classification of finite Weyl groupoids of rank two.
Then we generalize these Weyl groupoids to
`reflection groupoids' by admitting non-integral entries of the Cartan
matrices. This leads to the unexpected observation that
the spectrum of the cluster algebra of type $A_{n-3}$ completely describes
the set of finite reflection groupoids of rank two with $2n$ objects.
\end{abstract}

\maketitle

\section{Introduction}

In the last years, the classification of pointed Hopf algebras
has grown to a very fruitful subject. Amongst others, the Weyl groupoid
was invented \cite{p-H06} \cite{p-HY-08},
a structure which plays a similar role
as the Weyl group for semisimple Lie algebras.
A Weyl groupoid is a groupoid which is defined using a family of generalized
Cartan matrices.
The abstract notion of the Weyl groupoid
is perhaps somewhat too general from the viewpoint
of pointed Hopf algebras, but nevertheless its system of axioms is good
enough to provide a rich general theory and to admit
a classification at least of the finite case
(see \cite{p-CH09-2}, \cite{p-CH09-3}),
thus it has proven to be very useful anyway.

Recent work on finite rank three Weyl groupoids has revealed
a connection to combinatorics, because a Weyl groupoid yields a simplicial
arrangement in the same manner a Coxeter group does.
Since the classification of simplicial arrangements in the real projective
plane is still an open problem
- Gr\"unbaum conjectures that he has a complete list \cite{p-G-09} -
it appears now to be a natural question to generalize Weyl groupoids
to the case in which the Cartan entries are not necessarily integers:
This would at least yield one more case, the Coxeter group of type $H_3$.

However, the classification algorithm for rank three in \cite{p-CH09-3} requires a
good knowledge on finite rank two Weyl groupoids.
So to find an explanation of simplicial arrangements in terms of Weyl groupoids,
it is necessary to understand at least finite reflection groupoids of rank two
first. These are certain
groupoids generated by reflections given by a vast generalization of
Cartan matrices, see Section~\ref{regr_car} for a precise definition.

In general, we are mostly interested in Weyl groupoids which admit a
finite root system in the sense of \cite[Def.\,2.2]{p-CH09}.
A Weyl groupoid is called {\em universal} if for each object $a$
the group $\End (a)$ is trivial. It is {\em irreducible} if none of the
generalized Cartan matrices are decomposable.
In \cite{p-CH09-2}, a construction is given to obtain all finite universal
Weyl groupoids of rank two which admit a finite root system.
We refine this result by
explaining the combinatorics:

\begin{theor}[Theorem~\ref{wgr2_tri}]
There is a natural bijection between the set of
isomorphism classes of connected irreducible universal Weyl
groupoids of rank two with $2n$ objects which admit a finite root system,
and the triangulations of a convex
$n$-gon by non-intersecting diagonals, up to the symmetry of the dihedral
group $\mathbb{D}_n$.
\end{theor}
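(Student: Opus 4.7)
The plan is to deduce the bijection from two ingredients: the explicit combinatorial classification of finite universal rank-two Weyl groupoids carried out in~\cite{p-CH09-2}, and the classical theorem of Conway and Coxeter that identifies triangulations of a convex polygon with cyclic sequences of positive integers (``quiddity sequences'') characterized by a $2\times 2$ matrix-product identity.

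First, I would recall the main classification result of~\cite{p-CH09-2}: every such Weyl groupoid is encoded, up to isomorphism, by a cyclic sequence $(c_1,c_2,\ldots)$ of positive integers recording the off-diagonal Cartan entries encountered when one traverses the $2n$ objects via alternating applications of the two simple reflections. The axiom that the groupoid admits a finite root system unravels, after suitable manipulation, into a cyclic matrix-product identity of the form
\[
\prod_i \begin{pmatrix} c_i & -1 \\ 1 & 0 \end{pmatrix} = \pm\, \id,
\]
where the number of factors is controlled by $n$. Next I would invoke the Conway--Coxeter theorem, which asserts that the cyclic sequences of positive integers satisfying precisely this identity are in bijection with triangulations of a convex $n$-gon, with $c_i$ counting the triangles incident to the $i$-th vertex. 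Composing these two identifications yields a bijection between the parametrizing sequences and triangulations of the $n$-gon.

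Finally, I would match the equivalences. Isomorphism of connected irreducible universal Weyl groupoids corresponds precisely to cyclic rotation of the sequence (relabelling the initial object) together with reversal (interchanging the two simple roots); on the triangulation side, these are exactly the rotations and reflections constituting the dihedral group $\mathbb{D}_n$ of symmetries of the $n$-gon. One must also check that no non-trivial element of $\mathbb{D}_n$ identifies two \emph{non-isomorphic} groupoids, which follows from the faithfulness of the characteristic-sequence encoding in~\cite{p-CH09-2}.

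The main obstacle is the first step, namely identifying the finite-root-system axioms of~\cite[Def.\,2.2]{p-CH09} with the Conway--Coxeter matrix identity. Concretely, one must relate the cyclic composition of simple reflections governing the rank-two groupoid to the product of $2 \times 2$ frieze matrices; this is the conceptual bridge between the two worlds. Once this translation is established, the remaining combinatorial verifications -- the Conway--Coxeter bijection and the matching of the $\mathbb{D}_n$-actions -- reduce to bookkeeping.
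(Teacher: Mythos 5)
Your overall route is essentially the one the paper takes, just phrased through the Conway--Coxeter dictionary: the paper's set $\cAp$ of $\eta$-sequences is precisely the set of quiddity cycles, the reduction theorem \cite[Thm.~5.5]{p-CH09-2} quoted as Theorem~\ref{th:rep111} is the Conway--Coxeter ``ear reduction'', and the paper's proof of Theorem~\ref{wgr2_tri} amounts to the observation that a triangulation determines its vertex counts and that every triangulation of a convex polygon is a stacked polygon. Your matching of groupoid isomorphism with the rotation--reflection action of $\DG_n$ is also the paper's argument verbatim.

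There is, however, a genuine gap in your first step, which you yourself single out as the crux. The finite-root-system axiom does \emph{not} unravel into the matrix identity $\eta(c_n)\cdots\eta(c_1)=-\id$ alone, and cyclic sequences of positive integers satisfying only that identity are \emph{not} in bijection with triangulations. Concretely, the sequence $(1,1,1,1,1,1,1,1,1)$ satisfies $\eta(1)^9=(-\id)^3=-\id$, yet it is not the quiddity sequence of any triangulation of a $9$-gon, since the vertex counts of an $n$-gon triangulation must sum to $3(n-2)=21\ne 9$; correspondingly, the $18$-object Cartan scheme with all Cartan matrices of type $A_2$ admits no finite root system, because the root-system axioms of \cite[Def.~2.2]{p-CH09} force the object cycle to close after $2|R^a_+|=6$ steps. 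The correct translation, given in \cite[Def.~5.1, Prop.~6.5]{p-CH09-2}, is membership in $\cAp$: besides $c_i\ge 1$ and the product identity one must demand that the entries in the first column of every partial product $\eta(c_1)\cdots\eta(c_i)$ with $i<n$ are non-negative --- equivalently, that the successive roots $\beta_0,\dots,\beta_{n-1}$ are distinct positive vectors, or that the associated frieze pattern is positive. With that condition restored, the set of admissible sequences is exactly the set of quiddity cycles, your appeal to Conway--Coxeter becomes legitimate, and the remainder of your argument goes through.
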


Further, we explicitly give the corresponding root systems in
Proposition~\ref{rsrank2} in terms of sequences closely related
to the well-known Farey series.
We conclude in Corollary~\ref{sumof2}
that any positive root is either simple or the sum of two positive
roots (at the same object). We also describe the quotients of universal
coverings in Proposition~\ref{end_aa}.

For the classification of connected finite reflection
groupoids of rank two we proceed in the most natural way.
We view the entries of the defining `Cartan matrices'
as indeterminates, translate the axioms for finiteness into polynomials
and consider the resulting variety. Using an induction on the number of objects
with a similar rule as in \cite{p-CH09-2}, we obtain a surprising explanation
for the theory in rank two,
in which matrix mutation appears in a completely natural way.

\begin{theor}[Theorem~\ref{thm:main}] \label{thm:frg}
    Let $n\in \NN _{\ge 3}$ and let
  $\Wg $ be a connected finite reflection groupoid of rank two
  with $2n$ objects.
  The set of connected finite reflection groupoids of
rank two with the same objects and the same object change diagram as $\Wg $
is a variety isomorphic to the spectrum of the
cluster algebra of type $A_{n-3}$,
where the edges of the $n$-gon $q_1,\ldots,q_n$ are
specialized to $1$.
\end{theor}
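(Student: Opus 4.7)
The plan is to realize the variety in the theorem as the spectrum of the cluster algebra of type $A_{n-3}$ by setting up a dictionary between the combinatorics of triangulations (from Theorem~\ref{wgr2_tri}) and cluster charts in the sense of Fomin--Zelevinsky. Fix a triangulation $T$ of a convex $n$-gon corresponding to the object change diagram of $\Wg $, label its $n$ edges by indeterminates $q_1,\dots ,q_n$ and its $n-3$ diagonals by indeterminates $x_1,\dots ,x_{n-3}$. A reflection groupoid with the prescribed object change diagram is determined by its $2n$ Cartan matrices, and by the rank two axioms the Cartan entries at each object are governed by the two edges/diagonals of the associated triangle in $T$. I would first write each Cartan entry as an explicit rational function of the $q_i$ and the $x_j$, using the local geometry of $T$ together with the generalization of the parametrization from \cite{p-CH09-2}.

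Next I would extract the polynomial relations cutting out the variety from the finiteness axioms. At each interior diagonal of $T$, passing between the two triangles sharing it yields a single relation of Ptolemy/exchange type between the two diagonals (or edges) of the surrounding quadrilateral. The central local calculation is to check that after specialization $q_i=1$ this relation coincides with the cluster exchange relation for flipping that diagonal in the $A_{n-3}$ realisation on the $n$-gon. I would then proceed by induction on $n$, mirroring the recursive construction in \cite{p-CH09-2}: for $n=3$ the triangulation has no diagonals, $A_0$ has an empty cluster, and the variety becomes a point after $q_i=1$; for the inductive step, removing an ear triangle of $T$ produces an $(n-1)$-gon and, by the inductive hypothesis, a smaller reflection groupoid, while the local Ptolemy relation at the ear contributes precisely one new coordinate and one new exchange equation.

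The main obstacle will be the explicit algebraic match between the polynomial produced by the rank two finiteness axiom on a quadrilateral and the Ptolemy relation of the cluster algebra; this is the place where the surprising appearance of matrix mutation announced in the introduction has to be turned into a formal identity. Once this local identification is established, mutation of Cartan matrices across a shared object is seen to agree with a diagonal flip in $T$, i.e.\ with cluster mutation. Both the variety side and the $\mathrm{Spec}$ side are then determined by the collection of charts indexed by triangulations of the $n$-gon together with the mutations gluing them, so compatibility on a single quadrilateral propagates by the connectedness of the flip graph to a global isomorphism of varieties.
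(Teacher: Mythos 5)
Your proposal is built on the right intuition---ears of the polygon, Ptolemy-type exchange, induction on $n$---and in that sense it runs parallel to the paper's argument, which also proceeds by deleting a vertex of the $n$-gon in the inductive step of Lemma~\ref{le:murel}. But as written it has a genuine gap at exactly the point you flag as ``the main obstacle'': the variety in question is \emph{not} cut out by one local relation per interior diagonal of a chosen triangulation. By Lemma~\ref{cent_sym} and the computation preceding Theorem~\ref{thm:main}, a reflection groupoid with the given object change diagram is a point $(c_1,\dots,c_n)\in K^n$ (one coordinate per short diagonal $(i,i+2)$, not per diagonal of a fixed triangulation $T$), and the finiteness axiom is the single \emph{global} matrix identity $\eta (c_n)\cdots \eta (c_1)=-\id $. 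Converting this one identity into the Pl\"ucker/exchange relations is the entire content of the theorem; the paper does it via the explicit identity \eqref{equ1} for the matrices $\mu (f,a,b)$ and the resulting product formula of Lemma~\ref{le:murel}, neither of which you supply. Without that identity your ``central local calculation'' is an announcement, not a proof. A related misstep: the object change diagram is just a $2n$-cycle, so it does not ``correspond to'' a triangulation, and the $n$ Cartan entries are attached to the $n$ potential ears $(i,i+1,i+2)$ rather than to the $n-3$ diagonals of any one cluster; your initial parametrization by a single cluster chart therefore does not match how the variety actually presents itself.

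The second gap is the finale. Agreement of birational cluster charts under flips, propagated along the connectedness of the flip graph, does not by itself yield an isomorphism of affine varieties with $\mathrm{Spec}$ of the cluster algebra: you need an actual ring isomorphism. The paper constructs it explicitly in both directions, $\bar\varphi (c_i)=P_{i,i+2}$ (surjective by Lemma~\ref{grass_gen}, well defined by Lemma~\ref{le:murel} after specializing $P_{i,i+1}=1$) and $\bar\psi (P_{i,j})=(\eta (c_{j-2})\cdots \eta (c_i))_{11}$, and verifies that $\bar\psi $ respects the Pl\"ucker relations and inverts $\bar\varphi $. Some substitute for this two-sided construction---for instance your suggested Pl\"ucker matrix $z$ over the quotient ring, as in the paper's first remark after the proof---is indispensable; chart-by-chart compatibility alone could at best identify the function fields or an open cluster subvariety, not the spectra.
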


For the definition of the object change diagram see Section~\ref{regr_car}.

This note is organized as follows. In Section \ref{regr_car}
we recall the definition of the main structures as in \cite{p-CH09},
except that the Cartan entries may come from an arbitrary ring here.
In Section \ref{class_Weyl2} we explain the combinatorics of
finite Weyl groupoids of rank two.
In the last section, we shortly describe the Grassmannian $\Gr(2,n)$
following \cite[Lecture 3]{FR07}
and exhibit the connection to the reflection groupoids of rank two.

\section{Reflection groupoids of rank two}\label{regr_car}

Let $K$ be a ring.
We first introduce and recall some definitions and notations
needed to formulate the
definition of a finite reflection groupoid of rank two.
Remark that the only reason why we restrict to the case of rank two is that
we have no canonical choice for a system of axioms in the higher rank yet.

\begin{defin} \label{de:CS}
  Let $I$ be a set with $|I|=2$. Let $A$ be a non-empty set,
  $\rfl _i : A \to A$ a map for all $i\in I$,
  and $\Cm ^a=(\cm ^a_{jk})_{j,k \in I}$ a matrix
  in $K^{I \times I}$ for all $a\in A$. The quadruple
  \[ \cC = \cC (I,A,(\rfl _i)_{i \in I}, (\Cm ^a)_{a \in A})\]
  is called a \textit{$K$-Cartan scheme} if
  \begin{enumerate}
  \item[(M1)] $\cm^a_{ii}=2$ for all $a\in A$, $i\in I$,
  \item[(C1)] $\rfl _i^2 = \id$ for all $i \in I$,
  \item[(C2)] $\cm ^a_{ij} = \cm ^{\rfl _i(a)}_{ij}$ for all $a\in A$ and
    $i,j\in I$.
  \end{enumerate}
\end{defin}

The notion {\em $K$-Cartan scheme} has its origins in
\cite{p-CH09} where Cartan schemes are defined using a family of generalized
Cartan matrices. More precisely, a Cartan scheme is a $\ZZ$-Cartan scheme
for which all matrices $C^a$, where $a\in A$, are generalized Cartan matrices
in the sense of \cite[Sect.\,1.1]{b-Kac90}.
In Definition~\ref{de:CS} the matrices $C^a$ are not necessarily
generalized Cartan matrices, even if $K=\ZZ$.
The reason for the generality in our definition is twofold.
First, if we considered additional axioms on the matrices $C^a$ then
the interpretation of Theorem~\ref{thm:main} would be more complicated
than Theorem~\ref{thm:frg}.
Second, let $K=\ZZ$ and
assume that any real root (see below) associated to any $a\in A$
is either positive or negative. Then for all $a\in A$
the matrix $C^a$
is a generalized Cartan matrix, see \cite[Lemma~2.5]{p-CH09}.

  Two $K$-Cartan schemes are termed {\em equivalent} if there exist bijections
  between their sets $I$ respectively $A$ which satisfy the natural
  compatibility conditions, see \cite[Def.~2.1]{p-CH09} for details.

  For all $i \in I$ and $a \in A$ define $\s _i^a \in
  \Aut(K^I)$ by
  \begin{align}
    \s _i^a (\al _j) = \al _j - \cm _{ij}^a \al _i \qquad
    \text{for all $j \in I$.}
    \label{eq:sia}
  \end{align}
  Then for all $i\in I$ and $a\in A$ the linear map $\s _i^a$ is a reflection
  in the sense of \cite[Ch.\,V, \S2.2]{b-BourLie4-6}.

  To $\cC$ belongs a category $\Wg (\cC )$ such that $\Ob (\Wg (\cC ))=A$ and
  the morphisms are generated by the maps $\s _i^a$ which are by definition
  in $\Hom (a,\rfl _i(a))$ with $i\in I$, $a\in A$.

  To each object $a\in A$, one can associate a set of {\it real roots}
  \[ R^a = \{ \varphi(\alpha_i) \mid i\in I,\:\: \varphi\in\Hom(b,a),\:\:b\in A \}. \]
  In the special case $K=\ZZ$ we have a set $R^a_+=R^a\cap \NN _0^I$ of
  {\it positive roots}.
  We write $\rsCre(\cC) = (\cC,(R^a)_{a\in A})$.

\begin{remar}\label{weyl_groupoid}
  Let $\cC $ be a $\ZZ$-Cartan scheme and assume that for all $a\in A$
  the matrix $C^a$ is a generalized Cartan matrix.
  Then $\cC $ is a \textit{Cartan scheme}.
  Although for the purposes of this paper we assumed that $|I|=2$,
  in the general theory \cite{p-CH09} this assumption is not necessary.

  We say that $\cC $ is {\it irreducible} if the generalized Cartan matrix
  $C^a$ is indecomposable for any $a\in A$.
  The groupoid $\Wg(\cC)$ is called the {\it Weyl groupoid of $\cC $}
  (compare \cite{p-CH09}).
  The Weyl groupoid $\Wg (\cC )$ is {\it finite}
  if the number of objects in each connected
  component of $\Wg(\cC)$ is finite and
  if $|\Hom(a,b)|<\infty$ for all $a,b\in A$.
  We say that $\cC $ is {\it connected} if $\Wg (\cC )$ is connected.
  Further, $\cC $ is {\it simply connected} if $|\End (a)|=1$ for all
  $a\in A$.

  For the existence of a root system of type $\cC$
  in the sense of \cite[Def.~2.2]{p-CH09}
  it is necessary that the real roots are
  contained in $\NN _0^I\cup -\NN _0^I$. This is not always the case even
  if the rank of $\cC $ is two: A counterexample can be found in
  the proof of \cite[Thm.\,6.1]{p-CH09}.
  By \cite[Prop.~2.12]{p-CH09}, if there is a finite root system of type $\cC $
  then all roots are real. Therefore there exists a finite root system
  of type $\cC $ if and only if $\rsC \re (\cC )$ is a finite root system of
  type $\cC $.
\end{remar}

Let
$\cC = \cC (I,A,(\rfl _i)_{i \in I}, (\Cm ^a)_{a \in A})$
be a $K$-Cartan scheme.
We will omit the source of a morphism in the notation if it is clear from the
context.
The number $|I|$ is called the {\it rank} of $\cC$. It is always two in this
article.

Let $\Gamma $ be a non directed graph,
such that the vertices of $\Gamma $ correspond to the elements of $A$.
Assume that for all $i\in I$ and $a\in A$ with $\rfl _i(a)\not=a$
there is precisely one edge between the vertices $a$ and $\rfl _i(a)$
with label $i$, and all edges of $\Gamma $ are given in this way.
The graph $\Gamma $ is called the \textit{object change diagram} of $\cC $.

Assume that $\cC$ is connected.
Then the object change diagram of $\cC $ is either a chain or a cycle.
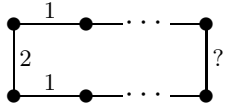
\begin{figure}
  \setlength{\unitlength}{.8mm}
  \begin{picture}(36,18)(0,3)
    \put(1,2){\circle*{2}}
    \put(2,2){\line(1,0){10}}
    \put(13,2){\circle*{2}}
    \put(14,2){\line(1,0){5}}
    \put(23,1){\makebox[0pt]{$\cdots $}}
    \put(27,2){\line(1,0){5}}
    \put(33,2){\circle*{2}}
    \put(1,3){\line(0,1){10}}
    \put(33,3){\line(0,1){10}}
    \put(1,14){\circle*{2}}
    \put(2,14){\line(1,0){10}}
    \put(13,14){\circle*{2}}
    \put(14,14){\line(1,0){5}}
    \put(23,13){\makebox[0pt]{$\cdots $}}
    \put(27,14){\line(1,0){5}}
    \put(33,14){\circle*{2}}
    \put(7,3){\makebox[0pt]{\scriptsize $1$}}
    \put(7,15){\makebox[0pt]{\scriptsize $1$}}
    \put(3,7){\makebox[0pt]{\scriptsize $2$}}
    \put(35,7){\makebox[0pt]{\scriptsize $?$}}
  \end{picture}
  \caption{Cycle diagram}
  \label{fi:cycle}
\end{figure}
In the second case the cardinality of $A$ is even, see Figure~\ref{fi:cycle}.
Let now $n\in \NN$
and assume that the object change diagram of $\cC $ is a cycle with $2n$
vertices.

Let $i,j\in I$ and $a\in A$. In analogy to \cite[1.1.6]{b-GP-00}, for all
$m\in \NN $ let
\[ \Prod(m;a,i,j)=\ldots \s_i \s_j \s^a_i \]
where the number of factors is $m$.
Let $\tau = \begin{pmatrix} 0 & 1 \\ 1 & 0 \end{pmatrix}$.

\begin{defin}
  For a $K$-Cartan scheme $\cC$ (of rank two, i.e. $|I|=2$)
  we call $\Wg(\cC)$ a {\it finite reflection groupoid} if
  the object change diagram of $\cC $ is a cycle and
\begin{eqnarray}\label{axiomF}
    \Prod(n;a,i,j) = -\tau^n
  \end{eqnarray}
  for all $a\in A$ and $i,j\in I$ with $i\not=j$, where $n=|A|/2$.
\end{defin}

Although we have no roots here, axiom \eqref{axiomF} is reasonable
since it generalizes the fact that a ``longest word'' should map
all positive roots to negative ones.

For the remaining part of this section assume that $\cC$
is a $K$-Cartan scheme such that $\Wg(\cC)$
is a finite reflection groupoid with $|A|=2n$ objects.
For simplicity assume that $I=\{1,2\}$.

By (C2), the $K$-Cartan scheme $\cC $ is locally of the form
{\tiny
\[ \xymatrix{
\cdots \ar@{-}[r]^-{1} &
{\begin{pmatrix} 2 & -c_1 \\ -c_2 & 2 \end{pmatrix}}
\ar@{-}[r]^{2} &
{\begin{pmatrix} 2 & -c_3 \\ -c_2 & 2 \end{pmatrix}}
\ar@{-}[r]^{1} &
{\begin{pmatrix} 2 & -c_3 \\ -c_4 & 2 \end{pmatrix}}
\ar@{-}[r]^-{2} & \cdots
} \]
}
for some $c_1,c_2,\ldots \in K$. Hence giving the sequence of Cartan
entries $c_1,c_2,\ldots$ and the label of the first morphism in this sequence
completely determines the $K$-Cartan scheme (and the reflection groupoid).
More precisely (compare \cite[Prop.~6.5]{p-CH09-2}):
\begin{lemma}\label{cent_sym}
  Let $a\in A$ and $i,j\in I$ with $I=\{i,j\}$.
  Let $a_1,a_2,\dots, a_{2n}\in A$ and $c_1,c_2,\dots,c_{2n}\in K$
  such that
\begin{align*}
    a_1=&a, & a_2=&\rfl _i(a_1), & a_3=&\rfl _j(a_2), &
    a_4=&\rfl _i(a_3), & a_5=&\rfl _j(a_4), & \ldots \\
    c_1=&-c^{a_1}_{ij}, & c_2=&-c^{a_2}_{ji}, &
    c_3=&-c^{a_3}_{ij}, & c_4=&-c^{a_4}_{ji}, &
    c_5=&-c^{a_5}_{ij}, & \ldots
\end{align*}
Then $c_{n+r}=c_r$ for all $r\in \{1,2,\ldots ,n\}$.
\end{lemma}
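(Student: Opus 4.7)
The plan is to translate everything into $2\times 2$ matrices and exploit axiom \eqref{axiomF} at two consecutive objects. Fix $I=\{1,2\}=\{i,j\}$ and write $k_r\in I$ for the label sending $a_r$ to $a_{r+1}$, so $k_r=i$ for $r$ odd and $k_r=j$ for $r$ even. Let $M_r$ be the matrix of $\s^{a_r}_{k_r}\in\Hom(a_r,a_{r+1})$ in the basis $(\al_1,\al_2)$. A direct unwinding of (C2) and the definition of $c_r$ gives
\[ M_r=\begin{pmatrix} -1 & c_r \\ 0 & 1 \end{pmatrix}\text{ ($r$ odd)},\qquad M_r=\begin{pmatrix} 1 & 0 \\ c_r & -1 \end{pmatrix}\text{ ($r$ even)}, \]
so $M_r$ determines $c_r$, and $\tau M_r\tau$ has the shape of the opposite parity with the same off-diagonal entry.

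Next, apply axiom \eqref{axiomF} at $a_r$ with starting label chosen to traverse the cycle forward. Since the labels $k_r$ alternate, the factor at position $a_{r+s}$ is $\s^{a_{r+s}}_{k_{r+s}}$, with matrix $M_{r+s}$, and the product of $n$ factors (read as a composition of linear maps acting on column vectors) becomes
\[ M_{r+n-1}M_{r+n-2}\cdots M_r=-\tau^n, \]
valid for every $r$. Writing this identity at $r$ and at $r+1$ and setting $Y=M_{r+n-1}\cdots M_{r+1}$, the first reads $YM_r=-\tau^n$ and the second reads $M_{r+n}Y=-\tau^n$. Eliminating $Y$ and using $(-\tau^n)^{-1}=-\tau^n$, which follows from $\tau^2=\id$, yields
\[ M_{r+n}=\tau^n M_r\tau^n. \]

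Finally, I read off $c_{r+n}=c_r$ by splitting on the parity of $n$. If $n$ is even, $\tau^n=\id$ and $r+n$ has the same parity as $r$, so $M_{r+n}=M_r$ has the same shape and the entries match directly. If $n$ is odd, $\tau^n=\tau$ and $r+n$ has opposite parity to $r$; the conjugation $M\mapsto\tau M\tau$ sends the odd-shape matrix with entry $c_r$ to the even-shape matrix with the same entry $c_r$, which is exactly what $M_{r+n}$ should equal. In either case $c_{r+n}=c_r$, which is the desired periodicity.

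No substantive obstacle is expected: once the matrices $M_r$ are written down, the calculation is forced by comparing axiom \eqref{axiomF} at two consecutive objects. The only place requiring care is the concluding parity bookkeeping, where one must verify that the shape change of $M_r\mapsto M_{r+n}$ coincides with the shape change produced by conjugation with $\tau^n$---which happens for exactly the same parity of $n$.
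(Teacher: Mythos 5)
Your proof is correct and follows essentially the same route as the paper: both compare axiom \eqref{axiomF} at two consecutive objects, cancel the common middle block of $n-1$ reflections, and conclude that the remaining two reflections agree up to conjugation by $\tau^n$, which preserves the defining Cartan entry in either parity. The only difference is presentational — you write the reflections as explicit $2\times 2$ matrices and treat general $r$ uniformly, where the paper argues once at $a_1,a_2$ and then invokes the same computation at the other objects.
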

\begin{proof}
By Equation~\eqref{axiomF},
\begin{eqnarray*}
(\s \Prod(n-1;\rho_i(a),j,i)) &=&
\Prod(n;\rho_i(a),j,i) \\
&=& -\tau^n \\
&=& \Prod(n;a,i,j) \\
&=& (\Prod(n-1;\rho_i(a),j,i)\s^a_i)
\end{eqnarray*}
in $\Aut (K^I)$,
where $\s$ is the first map in $\Prod(n;\rho_i(a),j,i)$. Hence
$\s=\s^a_i$ or $\tau \s \tau=\s^a_i$
depending on whether $n$ is even or odd.
But in both cases the $c_1$ and $c_{n+1}$ defining the reflections are equal.
Starting at the other objects gives $c_{n+r}=c_r$ for all $r\in \{1,2,\ldots ,n\}$.
\end{proof}
Let
\begin{equation}
  \Phi :I\times A\to K^n, \quad (i,a) \mapsto (c_1,c_2,\ldots ,c_n),
  \label{map_Phi}
\end{equation}
where $c_1,c_2,\ldots,c_n\in K$ are depending on $i\in I$ and $a\in A$
as in Lemma~\ref{cent_sym}.

\section{Classification of finite rank two Weyl groupoids}\label{class_Weyl2}

In \cite{p-CH09-2} we gave an inductive method to construct all finite
Weyl groupoids of rank two which admit a root system.
Here, we extend these results and give a
natural bijection to triangulations of convex polygons.
We describe in Proposition~\ref{rsrank2}
the set of positive roots in terms of ordered sequences of vectors in
$\NN _0^I$. This sequences are closely related to the well-known Farey series,
see the discussion in Remark~\ref{Farey}.
As a corollary we obtain
that any non-simple positive root is a
sum of two positive roots.

\subsection{Connected simply connected Cartan schemes}

Let $\cC=\cC(I$, $A$, $(\rfl _i)_{i \in I}$, $(\Cm ^a)_{a \in A})$ be a
connected simply connected Cartan scheme (see Rem.~\ref{weyl_groupoid}) with $I=\{1,2\}$
and assume that $\rsC \re (\cC )$ is a finite root system (of type $\cC $).
By \cite[Section 6]{p-CH09-2} the object change diagram of $\cC $
is a cycle with an even number of vertices.
For all $x\in \ZZ $ let
\begin{align}
   \eta (x)=
  \begin{pmatrix}
    x & -1 \\ 1 & 0
  \end{pmatrix}.
  \label{eq:etadef}
\end{align}
Following \cite[Def.~5.1]{p-CH09-2} let
$\cA $ denote the set of finite sequences $(c_1,\ldots ,c_n)$ of
integers such that $n\ge 1$ and $\eta (c_1)\cdots \eta (c_n)=-\id $.
Let $\cAp $ be the subset of $\cA $ formed by those $(c_1,\ldots ,c_n)\in
\cA $, for which $c_i\ge 1 $ for all $i\in \{1,\ldots ,n\}$ and
the entries in the first column of $\eta (c_1)\cdots \eta (c_i)$ are
non negative for all $i<n$.

In \cite[Prop.~6.5]{p-CH09-2} it was shown that Lemma~\ref{cent_sym}
holds for $\cC $, and that $(c_1,\dots,c_n)\in \cAp $
(using the notation of the lemma).

By \cite[Prop.~5.3]{p-CH09-2} (2),(3) the dihedral group $\DG _n$ of $2n$
elements, where $n\in \NN $, acts on sequences of length $n$ in $\cAp $
by cyclic permutation of the entries and by reflections. This action gives
rise to an equivalence relation $\sim $ on $\cAp $ by taking the
orbits of the action as equivalence classes.

Let $n,m\in \NN $ with $m\ge n$, and let $c=(c_1,\ldots ,c_n)$,
$d=(d_1,\ldots ,d_m)\in \cAp $.
We write $c\approx 'd$ if and only if
\begin{itemize}
  \item $m=n$, $c\sim d$ or
  \item $m=n+1$, $d=(c_1+1,1,c_2+1,c_3,c_4,\ldots  ,c_n)$.
\end{itemize}
The following is \cite[Def.~5.4]{p-CH09-2}:
  Let $c,d\in \cAp $. Write $c\approx d$ if and only if there exists $k\in
  \NN $ and a sequence $c=e_1,e_2,\ldots ,e_k=d$ of elements of $\cAp $, such
  that $e_i\approx 'e_{i+1}$ or $e_{i+1}\approx 'e_i$ for all $i\in
  \{1,2,\ldots ,k-1\}$.

\begin{theor}\label{th:rep111} \cite[Thm.~5.5]{p-CH09-2}
  The only element of $\cAp /{\approx }$ is $(1,1,1)$.
\end{theor}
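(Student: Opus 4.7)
The plan is to proceed by induction on $n$, the length of the sequence. The base case $n=3$ is immediate: the equation $\eta (c_1)\eta (c_2)\eta (c_3) = -\id $ with $c_i \ge 1$ forces $(c_1,c_2,c_3)=(1,1,1)$ by a direct matrix computation. For the inductive step, given $c=(c_1,\ldots ,c_n)\in \cAp $ with $n\ge 4$, I aim to produce a length-$(n-1)$ element $c'\in \cAp $ with $c'\approx 'c$, whereupon the inductive hypothesis gives $c\approx c'\approx (1,1,1)$. The mechanism is the identity
\begin{equation*}
\eta (x)\eta (1)\eta (y) = \eta (x-1)\eta (y-1),
\end{equation*}
which is verified by direct multiplication; it is precisely the inverse of the move $\approx '$. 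So if $c$ has a $1$ at position $k$ flanked by entries $\ge 2$, then after a cyclic rotation (contained in $\sim \subseteq \approx $ by \cite[Prop.~5.3]{p-CH09-2}) one may assume $k=2$ and set $c':=(c_1-1, c_3-1, c_4, \ldots , c_n)$; the identity preserves the full product, and the partial products of $c'$ agree with those of $c$ from index $2$ onwards, so $c'\in \cAp $ after a quick check of the first two columns.

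The heart of the proof is therefore the combinatorial claim: for $n\ge 4$ every $c\in \cAp $ contains an index $k$ with $c_k=1$ and $c_{k-1}, c_{k+1}\ge 2$ (cyclically). Let $v_i$ denote the first column of $\eta (c_1)\cdots \eta (c_i)$, so $v_0=(1,0)$, $v_i\in \NN _0^2$ for $i<n$ by definition of $\cAp $, and the relation $\eta (c_1)\cdots \eta (c_n) = -\id $ fixes $v_{n-1}=(0,1)$ and $v_n=(-1,0)$. The matrix recursion yields $v_i = c_i v_{i-1} - v_{i-2}$. I would first argue that some $c_i=1$: if not, then the height $h_i$ (sum of coordinates of $v_i$) satisfies $h_i - h_{i-1} = (c_i-1)h_{i-1} - h_{i-2} \ge h_{i-1}-h_{i-2}$, so the gaps are non-decreasing from $h_1-h_0 = c_1 -1 \ge 1$, forcing $h_{n-1}\ge n$ and contradicting $h_{n-1}=1$. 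Next I would rule out two cyclically adjacent $1$'s: for interior indices $2\le i\le n-2$, $c_i = c_{i+1} = 1$ collapses the recursion to $v_{i+1} = -v_{i-2}$, which is non-zero and has no positive coordinate, contradicting $v_{i+1}\in \NN _0^2$. The boundary cases $i\in \{1, n-1\}$ and the wrap-around $c_n = c_1 = 1$ are disposed of by direct substitution of the explicit boundary data ($v_0$, $v_{n-1}$, $v_n$ together with $\det (v_{j-1}, v_j) = 1$) or by a single cyclic shift. Combining these two facts produces the desired $k$.

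The main obstacle is this Farey-type analysis of the vectors $v_i$: the height argument is clean, but the no-adjacent-$1$'s argument is fiddly at the boundary indices and must be split into cases. Once the combinatorial lemma is in hand, the reduction step and the verification $c'\in \cAp $ amount to the matrix identity plus a bookkeeping check, so the induction closes immediately.
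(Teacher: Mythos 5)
The paper offers no proof of this theorem --- it is imported verbatim from \cite[Thm.~5.5]{p-CH09-2} --- so the comparison is necessarily with that reference rather than with an argument in the text. Your proof is correct, and it is essentially the intended one: the contraction identity $\eta(x)\eta(1)\eta(y)=\eta(x-1)\eta(y-1)$ is precisely \cite[5.2]{p-CH09-2} (the specialization of Equation~\eqref{equ1} of the present paper to edge labels $1$), and the induction ``locate an entry $1$ whose two cyclic neighbours are $\ge 2$, rotate it to position $2$ using the $\DG _n$-stability of $\cAp $ from \cite[Prop.~5.3]{p-CH09-2}, contract'' is exactly the reduction underlying the recursive description of $\cAp $ in Definition~\ref{eta_seq}. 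Both of your supporting claims check out: with $v_i$ the first column of $\eta(c_1)\cdots\eta(c_i)$ one has $v_i=c_iv_{i-1}-v_{i-2}$ and $v_{n-1}=(0,1)$, so if all $c_i\ge 2$ the gaps of the coordinate sums $h_i$ are non-decreasing and positive (note that $h_1-h_0=c_1$, not $c_1-1$; the bound $\ge 1$ is unaffected), forcing $h_{n-1}\ge n$ and contradicting $h_{n-1}=1$; and $c_i=c_{i+1}=1$ at an interior position yields $v_{i+1}=-v_{i-2}$, which is nonzero (the partial products have determinant $1$) with nonpositive entries, contradicting $v_{i+1}\in \NN _0^2$, while the boundary configurations reduce to this case by a cyclic shift. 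Two cosmetic repairs: record that $\cAp $ contains no sequences of length $1$ or $2$ (the off-diagonal entries of $\eta(c_1)$ and of $\eta(c_1)\eta(c_2)$ forbid the value $-\id $ when $c_i\ge 1$), so the induction genuinely bottoms out at $(1,1,1)$; and the verification that $c'=(c_1-1,c_3-1,c_4,\dots,c_n)\in \cAp $ concerns the first columns of the partial products (the $j$-th partial product of $c'$ equals the $(j+1)$-st one of $c$ for $j\ge 2$, and $\eta(c_1-1)$ has first column $(c_1-1,1)\ge 0$), not ``the first two columns''. Neither point affects the validity of the argument.
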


Summarizing this, the set $\cAp$ may also be defined as follows:
\begin{defin}\label{eta_seq}
Define {\it $\eta$-sequences} recursively in the following way:
\begin{enumerate}
\item $(1,1,1)$ is an $\eta$-sequence. \label{eta1}
\item If $(c_1,\ldots,c_n)$ is an $\eta$-sequence, then
$(c_2,c_3,\ldots,c_{n-1},c_n,c_1)$ and
$(c_n,c_{n-1},\ldots,c_2,c_1)$ are $\eta$-sequences. \label{eta2}
\item If $(c_1,\ldots,c_n)$ is an $\eta$-sequence, then
$(c_1+1,1,c_2+1,c_3,\ldots,c_n)$ is an $\eta$-sequence. \label{eta3}
\item Every $\eta$-sequence is obtained recursively by 
\eqref{eta1},\eqref{eta2},\eqref{eta3}.
\end{enumerate}
Then $\cAp$ is the set of $\eta$-sequences.
\end{defin}

Recall the map $\Phi$ from Equation~\eqref{map_Phi}.
\begin{theor} \cite[Thm.~6.6]{p-CH09-2} \label{th:ctors}
  Let $n\in \NN $ and $c=(c_1,c_2,\ldots ,c_n)\in \cAp $. Then there is a
  unique (up to equivalence) connected simply connected Cartan scheme of
  rank two
  such that $\rsC \re (\cC )$ is an irreducible root system and
  $c\in \mathrm{Im}\,\Phi $.
\end{theor}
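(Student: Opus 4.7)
The plan is to split the proof into an existence part, done by explicit construction followed by an induction along the recursive definition of $\cAp $ given in Definition~\ref{eta_seq}, and a uniqueness part, which will follow directly from Lemma~\ref{cent_sym}.

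For existence, given $c=(c_1,\ldots ,c_n)\in \cAp $ I would build a candidate $\cC (c)$ as follows. Take $A=\{a_1,\ldots ,a_{2n}\}$ arranged as a $2n$-cycle with edges alternately labelled $1$ and $2$, let $\rfl _1,\rfl _2$ act by moving along the corresponding edges, and prescribe the Cartan matrices at each $a_k$ so that, reading off $\Phi (i,a_1)$ in the convention of Lemma~\ref{cent_sym}, one recovers $(c_1,\ldots ,c_n)$, periodically extended by $c_{n+r}=c_r$. Axioms (M1), (C1), (C2) are then immediate from the construction, and the identity $\eta (c_1)\cdots \eta (c_n)=-\id $ built into the definition of $\cA $ is exactly what translates into the longest-word relation needed at each vertex. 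What remains is to verify that $\rsCre (\cC (c))$ is a finite irreducible root system; this is what the induction has to deliver.

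The base case $c=(1,1,1)$ corresponds to the standard Cartan scheme of type $A_2$, whose real roots form the classical irreducible finite root system with six positive roots. The dihedral rule \eqref{eta2} only relabels the base object or reverses the orientation of the cycle, so it preserves $\cC (c)$ up to equivalence and leaves the root system unchanged. The main obstacle is the insertion rule \eqref{eta3}: if $c'=(c_1+1,1,c_2+1,c_3,\ldots ,c_n)$ and $\cC (c)$ already has a finite irreducible real root system, I would obtain $\cC (c')$ by subdividing one edge of the $2n$-cycle into three consecutive edges carrying the new entries $(c_1+1,1,c_2+1)$. The key step is then to exhibit an explicit correspondence between the real roots of $\cC (c)$ and those of $\cC (c')$, realised as a single ``mutation'' given by the new reflection at the inserted vertex; one checks that this transport sends positive roots to positive roots, sends each $R^a$ to a finite set, and preserves irreducibility of the Cartan matrices. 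I expect this geometric compatibility of the insertion with the sets of real roots to be the most delicate and computation-heavy part, as it requires tracking how the $\eta $-matrix factorisation changes under \eqref{eta3}.

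For uniqueness, suppose $\cC _1$ and $\cC _2$ are two connected simply connected Cartan schemes of rank two, each with an irreducible finite real root system, and with $c\in \mathrm{Im}\,\Phi _1\cap \mathrm{Im}\,\Phi _2$. Their object change diagrams are cycles of the same length $2n$ by the length of $c$, and by Lemma~\ref{cent_sym} the entire periodic sequence of Cartan entries around each cycle is determined by the single window $(c_1,\ldots ,c_n)$. Choosing base objects $a\in A_1$, $a'\in A_2$ and a common initial direction $i\in I$ at which $\Phi _{\!1}(i,a)=\Phi _{\!2}(i,a')=c$, I would identify $a\leftrightarrow a'$ and extend along the cycles step by step; the compatibility of the $\rfl _j$ with the cycle structure makes this extension a well-defined bijection $A_1\to A_2$, and the matching of Cartan matrices at every object is forced by the common periodic sequence. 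This produces an equivalence of $K$-Cartan schemes in the sense of \cite[Def.~2.1]{p-CH09} and completes the argument.
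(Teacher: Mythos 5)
First, a remark on the comparison itself: the paper does not prove this statement --- it is quoted from \cite[Thm.~6.6]{p-CH09-2} --- so there is no in-paper proof to measure yours against. Judged on its own, your architecture (construct the $2n$-cycle from the periodic extension $c_{n+r}=c_r$, reduce to the base case $(1,1,1)$ via Theorem~\ref{th:rep111}, check stability under the moves of Definition~\ref{eta_seq}, and get uniqueness from the fact that the window $(c_1,\dots,c_n)$ determines all Cartan entries) is reasonable, and it is essentially the strategy the authors use later for Proposition~\ref{rsrank2}. The original proof in \cite{p-CH09-2} is more direct: the roots are read off the partial products $\eta(c_1)\cdots\eta(c_i)$, and positivity comes straight from the non-negativity condition built into the definition of $\cAp$, with no induction on the recursive presentation.

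The gap is that you defer exactly the step that carries the content of the theorem. Showing that the insertion move \eqref{eta3} preserves the property that $\rsC \re (\cC )$ is a finite root system is not a routine check: inserting a `$1$' at position $\nu$ (and necessarily also at $\nu+n$) adds two objects, increases two Cartan entries, and enlarges each set $R^a_+$ by one element, namely $\beta_\nu+\beta_{\nu-1}$ in the notation of the proof of Proposition~\ref{rsrank2}; one must verify that this new vector lies in $\NN_0^2$, that no other new roots appear at the old objects, and that the axioms of \cite[Def.~2.2]{p-CH09} survive. Your phrase ``exhibit an explicit correspondence between the real roots'' also mis-describes the situation, since the two root systems have different cardinalities --- the move creates a root rather than transporting one. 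A second, smaller issue concerns uniqueness: you invoke Lemma~\ref{cent_sym}, but that lemma is proved only for finite reflection groupoids, i.e.\ under Axiom~\eqref{axiomF}; for a Cartan scheme with a finite root system you must first establish that axiom (the paper does so via \cite[Prop.~6.5]{p-CH09-2}, essentially the result you are trying to prove) or derive the periodicity $c_{n+r}=c_r$ directly from the fact that the longest element maps positive roots to negative ones. Once that periodicity is justified, your step-by-step identification of the two cycles does yield uniqueness up to equivalence.
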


Together with \cite[Prop.~6.5]{p-CH09-2} this gives that
the connected simply connected Cartan schemes of rank two, for which
$\rsC \re (\cC )$ is an irreducible root system, are those with
cycle diagram as object change diagram, and where the Cartan entries
yield an $\eta $-sequence.
Since the symmetry group of such object change diagrams is the
dihedral group, we obtain exactly one equivalence class of connected simply
connected Cartan schemes for each element of $\cAp/\sim$.

Let $\cT_n$ be the set of triangulations of a convex $n$-gon by non-inter\-secting
diagonals for a fixed labeling of the vertices by $1,\ldots,n$,
and let $\cT=\bigcup_{n=3}^\infty \cT_n$.

Let $\Psi : \cAp \rightarrow \cT$ be the map that maps an $\eta$-sequence
$(c_1,\ldots,c_n)$ to the triangulation of the $n$-gon which has $c_i$
triangles attached to the vertex $i$:
The sequence $(1,1,1)$ maps to the triangle.
Inserting a `$1$' in the $\eta $-sequence
as in Definition~\ref{eta_seq}(3) corresponds
to attaching a new triangle at the edge $(1,2)$.
Definition~\ref{eta_seq}(2)
just corresponds to the natural action of the dihedral group on the polygon.
Thus $\Psi$ is well-defined.

We can now deduce the result of this subsection:

\begin{theor}\label{wgr2_tri}
The map $\Psi$ is bijective.
\end{theor}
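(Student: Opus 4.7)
The plan is to argue injectivity of $\Psi$ directly from its recursive description and then establish surjectivity by constructing an inverse via ear-removal of triangulations.

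First, I would verify by induction, following rules \eqref{eta1}--\eqref{eta3}, that the triangulation $\Psi(c_1,\ldots,c_n)$ has exactly $c_i$ triangles incident to vertex $i$: this is immediate for $(1,1,1)$; the dihedral action \eqref{eta2} preserves the property; and insertion \eqref{eta3} transforms the degree profile from $(c_1,c_2,\ldots,c_n)$ to $(c_1+1,1,c_2+1,c_3,\ldots,c_n)$, which is exactly what happens when a new triangle is glued along the edge between vertices $1$ and $2$, creating a new vertex of degree $1$ and raising the degrees at the two old endpoints by $1$. Hence $(c_1,\ldots,c_n)$ can be read back as the degree sequence of $\Psi(c_1,\ldots,c_n)$, which proves injectivity.

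For surjectivity I would send a triangulation $T$ of a convex labeled $n$-gon to its vertex-degree sequence $(c_1,\ldots,c_n)$, and show by induction on $n$ that this sequence lies in $\cAp$ and maps under $\Psi$ back to $T$. The base $n=3$ is trivial. For $n\ge 4$, the dual graph of $T$ is a tree on $n-2\ge 2$ nodes, hence has at least two leaves, each a vertex with $c_i=1$ (an ear of $T$); after cyclic relabeling via rule \eqref{eta2} I may assume the ear sits on vertices $1,2,3$. Deleting vertex $2$ yields a triangulation $T'$ of the convex $(n-1)$-gon on $\{1,3,4,\ldots,n\}$ whose degree sequence is $(c_1-1,c_3-1,c_4,\ldots,c_n)$. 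By induction this is an $\eta$-sequence mapping under $\Psi$ to $T'$, so rule \eqref{eta3} produces the $\eta$-sequence $(c_1,1,c_3,c_4,\ldots,c_n)$, whose image under $\Psi$ is $T'$ with a new ear glued onto its edge $1\text{--}2$. Unfolding the relabelings, this is precisely the original triangulation $T$.

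The main technical point is checking the reduction step: to apply rule \eqref{eta3} in reverse one needs $c_1\ge 2$ and $c_3\ge 2$ so that a valid $\eta$-sequence of smaller length results. This holds for $n\ge 4$ because vertex $1$ is also incident to the polygon edge $1\text{--}n$, which belongs to some triangle of $T$ different from the ear, forcing $c_1\ge 2$, and symmetrically $c_3\ge 2$. Beyond this, the proof is pure label bookkeeping under rotation and insertion; it does not invoke Theorem~\ref{th:rep111}, and in fact reproves it geometrically by exhibiting the ear-decomposition of an arbitrary triangulation.
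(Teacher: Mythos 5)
Your argument is correct and is essentially the paper's own proof in expanded form: injectivity by reading the $\eta$-sequence back off as the vertex-degree (triangle-count) sequence of the triangulation, and surjectivity from the fact that every triangulation of a convex polygon is a planar $2$-tree, i.e.\ is built from a triangle by successively gluing ears. The paper simply cites this last fact, whereas you supply its standard proof via the leaves of the dual tree; the details you check (in particular $c_1,c_3\ge 2$ at the reduction step) are the right ones.
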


\begin{proof}
Since an element of $\cT$ uniquely defines an $\eta$-sequence, $\Psi$ is
injective.
On the other hand, each triangulation of the $n$-gon by non-intersecting
diagonals may be obtained by starting with a triangle and by successively
attaching triangles at the edges (this is sometimes called a flexagon or a
planar $2$-tree), so $\Psi$ is surjective.
\end{proof}

We end this subsection by determining the roots of the
Cartan schemes of rank two for which
$\rsC \re (\cC )$ is a finite irreducible root system.

\begin{defin}\label{R_seq}
Define {\it $\cEs$-sequences} as finite sequences of length $\ge 3$
with entries in $\NN _0^2$ given by the following recursion.
\begin{enumerate}
\item $((0,1),(1,1),(1,0))$ is an $\cEs$-sequence. \label{R1}
\item If $(v_1,\ldots,v_n)$ is an $\cEs$-sequence, then
\[ (v_1,\ldots,v_i,v_i+v_{i+1},v_{i+1},\ldots,v_n) \]
are $\cEs$-sequences for $i=1,\ldots,n-1$. \label{R2}
\item Every $\cEs$-sequence is obtained recursively by 
\eqref{R1} and \eqref{R2}.
\end{enumerate}
\end{defin}

\begin{remar} \label{Farey}
Notice the resemblance to Farey series (see \cite[III]{HW}):
The Farey series $\cEs_n$ of order $n$ is the ascending series of
irreducible fractions between $0$ and $1$ whose denominators do not
exceed $n$. A characteristic property is: If $\frac{h}{k}$,
$\frac{h''}{k''}$, and $\frac{h'}{k'}$ are three successive terms of $\cEs_n$, then
$\frac{h''}{k''} = \frac{h+h'}{k+k'}$.

Except $(1,0)$, we may view an entry $(a,b)$ of an $\cEs$-sequence
as the rational number $\frac{a}{b}$. This maps an $\cEs$-sequence to
an ascending sequence of rational numbers.
Axiom \eqref{R2} corresponds to the above characteristic property of a Farey series.
On positive roots (including $(1,0)$), we define
\[ (a,b) \le_\QQ (c,d) \quad \Longleftrightarrow \quad ad \le cb. \]
\end{remar}

\begin{propo} \label{rsrank2}
To each $\cEs$-sequence $f$ there is up to equivalence a unique
connected simply connected Cartan scheme $\cC$ such that $\rsCre(\cC)$
is a finite root system and the set of entries of $f$ is $R^a_+$
for some $a\in A$.

Conversely, let $\cC = \cC (I,A,(\rfl _i)_{i \in I}, (\Cm ^a)_{a \in A})$
be a Cartan scheme of rank two. Assume that $\rsCre(C)$ is a finite
irreducible root system. Let $a\in A$. Then sorting the elements
of $R^a\cap \NN _0^2$
with respect to $\le_\QQ$ in ascending order yields an
$\cEs$-sequence.
\end{propo}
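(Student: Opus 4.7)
My strategy is a joint induction on the recursive definition of $\cAp $ (Definition~\ref{eta_seq}) that tracks both the $\eta $-sequence of a Cartan scheme and the sorted list of its positive roots at a chosen object. Given an $\eta $-sequence $c=(c_1,\ldots ,c_n)\in \cAp $, Theorem~\ref{th:ctors} provides a unique (up to equivalence) connected simply connected Cartan scheme $\cC $ with $c\in \mathrm{Im}\,\Phi $; pick $a\in A$ and $i\in I$ with $\Phi (i,a)=c$. I will show that sorting $R^a_+$ in ascending $\le _\QQ $-order yields an $\cEs $-sequence $(v_1,\ldots ,v_n)$, and that this assignment is compatible with the recursive rules generating $\cAp $. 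Both halves of the proposition then follow: the first via the existence and uniqueness of Theorem~\ref{th:ctors}; the second because every Cartan scheme in question produces an $\eta $-sequence via $\Phi $, whose associated sorted root list is thus an $\cEs $-sequence.

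For the base case \eqref{eta1}, $c=(1,1,1)$ yields the rank-two $A_2$ Cartan scheme, whose positive roots at any object are $\{(1,0),(1,1),(0,1)\}$; sorting by $\le _\QQ $ gives $((0,1),(1,1),(1,0))$, matching Definition~\ref{R_seq}\eqref{R1}. For step \eqref{eta2}, the dihedral action on $c$ simply corresponds to varying the pair $(i,a)$ along the object-change cycle: a cyclic shift moves to a neighbouring object, and the reversal swaps the two indices of $I$. The induced effect on $R^a_+$ (computed via the identity $R^{\rfl _i(a)}_+=\s _i^a(R^a_+\setminus \{\alpha _i\})\cup \{\alpha _i\}$ in the first case, and coordinate exchange in the second) preserves the $\cEs $-sequence property.

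The decisive step is \eqref{eta3}. Suppose $c$ is associated to $(v_1,\ldots ,v_n)$ and consider $c'=(c_1+1,1,c_2+1,c_3,\ldots ,c_n)\in \cAp $; under $\Psi $ (Theorem~\ref{wgr2_tri}) this corresponds to attaching a triangle at the edge $(1,2)$ of the polygon. The new Cartan scheme $\cC '$ has two additional objects and hence one additional positive root at the corresponding base object. I would identify this new root by a direct computation using \eqref{eq:sia} and the matrix description \eqref{eq:etadef}: the modified Cartan entries $c_1+1$ and $c_2+1$ force the first reflections in $\cC '$ to produce $v_1+v_2$ as a new real root, placed strictly between $v_1$ and $v_2$ in $\le _\QQ $-order. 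This is exactly the operation of Definition~\ref{R_seq}\eqref{R2} at $i=1$, so the inductive hypothesis propagates.

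The principal obstacle is the explicit verification in step \eqref{eta3}: one must show that the unique new positive root is the mediant $v_1+v_2$ in the correct slot of the sorted sequence. The conceptual justification comes from Remark~\ref{Farey}: inserting a $1$ in an $\eta $-sequence is the reflection-groupoid analogue of inserting a Farey mediant, and Definition~\ref{R_seq}\eqref{R2} is set up precisely to mirror this. Concretely, I would verify the match by tracking the columns of the partial matrix products $\eta (c'_1)\eta (c'_2)\cdots $ and comparing them with $\eta (c_1)\eta (c_2)\cdots $—a finite computation facilitated by the combinatorial framework already established in \cite{p-CH09-2}.
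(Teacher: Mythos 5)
Your overall strategy --- induction over the recursive generation of $\cAp$, base case $(1,1,1)$ with sorted roots $((0,1),(1,1),(1,0))$, and the insertion of a `$1$' producing a mediant root --- is the same as the paper's, and your identification of the new root in step \eqref{eta3} as the mediant $v_1+v_2$ is correct; it can indeed be checked by tracking partial products, using the identity $\eta(a+1)\eta(1)\eta(b+1)=\eta(a)\eta(b)$, which also explains why all the remaining roots are literally unchanged. The problem is step \eqref{eta2}. You assert that the passage from $R^a_+$ to $R^{\rfl _i(a)}_+=\s _i^a(R^a_+\setminus \{\alpha _i\})\cup \{\alpha _i\}$ ``preserves the $\cEs$-sequence property'', and your induction genuinely needs this, because rule \eqref{eta3} only inserts at the front of the $\eta$-sequence and every other insertion position is reached only through cyclic shifts. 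But this preservation statement is not a formality: $\s _i^a$ does not respect the order $\le_\QQ$, the simple root $\alpha _i$ is removed and re-adjoined rather than transported, and the sorted sequence at $\rfl _i(a)$ is not simply the image of the sorted sequence at $a$. Proving it directly requires essentially the same bookkeeping as the proposition itself, so as written your induction has a hole exactly where the work lies.

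The paper avoids this by never changing the base object: it inserts the `$1$' at an \emph{arbitrary} position $\nu$ and writes each positive root at the fixed object $a_1$ in two ways, as $\beta _\mu =s_1\cdots s_\mu (\alpha _{2-(\mu \Mod 2)})$ built up from one side of the object cycle and as $\tilde \beta _\mu =s_{2n}^{-1}\cdots s_{2n+1-\mu }^{-1}(\alpha _{(\mu \Mod 2)+1})$ from the other, linked by the longest-word relation $\beta _\mu =\tilde \beta _{n-1-\mu }$ coming from Axiom \eqref{axiomF}. The roots indexed below the insertion point are visibly unchanged in the $\beta $-form, those above it in the $\tilde \beta $-form, and exactly one new root, $\beta _\nu +\beta _{\nu -1}$, appears; this also makes the converse immediate, since every object is the base object for its own $\eta $-sequence. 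To repair your argument, either adopt this device, or observe that your partial-product computation together with $\eta (c_\nu +1)\eta (1)\eta (c_{\nu +1}+1)=\eta (c_\nu )\eta (c_{\nu +1})$ already works at an arbitrary position $\nu$, which lets you drop the cyclic-shift part of step \eqref{eta2} entirely (the reversal part is harmless, being a coordinate swap composed with order reversal).
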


\begin{proof}
For $c\in\cAp$ of length $n$, consider
the Cartan scheme $\cC $ with objects
$a_1,\ldots,a_{2n}$ such that $\Phi(2,a_1)=c$ and $\rho_2(a_1)=a_2$.
The numbers $c_1,\ldots,c_n$ are the Cartan entries used for certain
reflections $s_i\in \Hom (a_{i+1},a_i)$, $1\le i \le 2n$, where $a_{2n+1}=a_1$.
By the proof of \cite[Thm.~6.6]{p-CH09-2}, the real positive roots at $a_1$ are
\[ \{ \beta_\nu:= s_1 s_2 \cdots s_\nu (\alpha_{2-(\nu \Mod 2)})
\:\:|\:\: \nu=0,\ldots,n-1 \}, \]
or equivalently
\[ \{ \tilde\beta_\nu:=s_{2n}^{-1}s_{2n-1}^{-1}\cdots s_{2n+1-\nu }^{-1}
(\alpha_{(\nu \Mod 2)+1}) \:\:|\:\: \nu=0,\ldots,n-1 \}. \]
For $0\le\mu<n$, the equation
$s_1\ldots s_{\mu+1}(-\tau^n)=s_{2n}^{-1}\ldots s_{n+\mu+2}^{-1}$ implies
that $\beta_\mu = \tilde\beta_{n-1-\mu}$, thus the set of real positive roots
is also
\[ \{\beta_0,\ldots,\beta_\nu,\tilde\beta_{n-2-\nu},\ldots,\tilde\beta_0 \} \]
for any fixed $0\le\nu<n$.
If
using Definition~\ref{eta_seq}\eqref{eta2},\eqref{eta3}
we include a `$1$'
in the sequence
at position $\nu$ (and of course also at position $\nu+n$), i.e.
\[ (\ldots,c_{\nu},c_{\nu+1},\ldots) \mapsto
(\ldots,c_{\nu}+1,1,c_{\nu+1}+1,\ldots), \]
this will not affect the roots
$\tilde\beta_{n-2-\nu},\ldots,\tilde\beta_0,\beta_0,\ldots,\beta_{\nu-1}$
coming from $a_{n+3+\nu},\ldots,a_{2n},a_1,a_1,\ldots,a_\nu$.
Let $i,j\in I$ be such that $\rho_i(a_\nu)=a_{\nu+1}$ and $i\ne j$.
If $\tilde s_\nu$, $t$, $\tilde s_{\nu+1}$ are the new morphisms
corresponding to $c_{\nu}+1$, $1$, $c_{\nu+1}+1$ respectively, then
\[ s_1\cdots s_{\nu-1}\tilde s_\nu(\alpha_j)
= s_1\cdots s_\nu(\alpha_j)+s_1\cdots s_{\nu-1}(\alpha_i)
= \beta_\nu+\beta_{\nu-1}, \]
\[ s_1\cdots s_{\nu-1} \tilde s_\nu t(\alpha_i)=\beta_\nu. \]
These will be the positive roots coming from the two objects replacing $a_{\nu+1}$.
Hence $\beta_\nu+\beta_{\nu-1}$ is the new root after the transformation.
Since the positive roots for the $\eta$-sequence $(1,1,1)$ are
$((0,1),(1,1),(1,0))$, this proves that $\cEs$-sequences are sets of
positive roots.

Conversely, a set of positive roots uniquely determines an $\eta$-sequence
by \cite[Prop.~6.5]{p-CH09-2}, and this $\eta $-sequence
corresponds to an $\cEs$-sequence by the above construction.
\end{proof}

\begin{corol} \label{sumof2}
Let $\cC = \cC (I,A,(\rfl _i)_{i \in I}, (\Cm ^a)_{a \in A})$
be a Cartan scheme of rank two.
If $\rsCre(\cC) = (\cC,(R^a)_{a\in A})$ is a finite root system of type $\cC $
then, for all $a\in A$ and $\alpha\in R^a_+$, either $\alpha$ is simple
or it is the sum of two positive roots in $R^a_+$.
\end{corol}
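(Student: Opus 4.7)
The plan is to deduce the corollary directly from Proposition~\ref{rsrank2} by a short structural induction on the construction of $\cEs$-sequences. First I would reduce to the irreducible case. If $C^a$ is decomposable, then $c^a_{12}=c^a_{21}=0$, so each of $\s_1^a$ and $\s_2^a$ fixes the opposite simple root. Axiom (C2) transports $c_{12}=0$ to $\rfl_1(a)$, and for a rank-two generalized Cartan matrix the vanishing of $c_{12}$ forces $c_{21}=0$; applying the same reasoning with $\rfl_2$, decomposability propagates through the whole connected component of $a$. Consequently $R^a_+=\{\al_1,\al_2\}$ and the statement is vacuous.

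In the irreducible case, Proposition~\ref{rsrank2} tells us that sorting $R^a_+=R^a\cap\NN_0^2$ in ascending order with respect to $\le_\QQ$ yields an $\cEs$-sequence. It therefore suffices to prove the purely combinatorial assertion: \emph{every entry of an $\cEs$-sequence other than $(1,0)$ and $(0,1)$ is the sum of two entries of the same sequence}. I would establish this by structural induction along Definition~\ref{R_seq}. The base sequence $((0,1),(1,1),(1,0))$ satisfies the assertion because $(1,1)=(0,1)+(1,0)$. For the inductive step, suppose $(v_1,\ldots,v_n)$ has the property and form its extension $(v_1,\ldots,v_i,v_i+v_{i+1},v_{i+1},\ldots,v_n)$ as in Definition~\ref{R_seq}\eqref{R2}. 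The inserted entry $v_i+v_{i+1}$ is by construction the sum of two entries of the new sequence. Each previously non-simple entry $v_j$ was, by the inductive hypothesis, already a sum of two entries of the old sequence; since both of those entries remain in the new sequence, the assertion is preserved.

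I do not expect a real obstacle: the one point that needs a brief verification is the reduction to the irreducible case, after which the argument is a transparent structural induction that simply reads off the defining recursion for $\cEs$-sequences.
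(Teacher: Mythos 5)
Your proof is correct and follows the route the paper intends: the corollary is meant to be read off from Proposition~\ref{rsrank2} together with the recursion in Definition~\ref{R_seq}, exactly as in your structural induction (each inserted entry $v_i+v_{i+1}$ is a sum of two entries that persist, and previously established decompositions survive). Your preliminary reduction to the irreducible case, using (C2) and the generalized Cartan matrix axiom to propagate decomposability through the connected component so that $R^a_+=\{\al_1,\al_2\}$, is a small but legitimate point of care that the paper leaves implicit.
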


\begin{examp}
    We determine the sets $R^a_+$ for all
    Cartan schemes $\cC $ where $\rsC \re (\cC )$
    is a finite irreducible root system with at most 5 positive roots.
\begin{enumerate}
\item With three positive roots:
    The $\eta$-sequence is $(1,1,1)$, it corresponds to the universal
    covering of the Weyl group of type $A_2$ (see Definition~\ref{de:cover}).
    The set of positive roots is $\{(0,1)$, $(1,1)$, $(1,0)\}$.
\item With four positive roots: There are two positions for including
a `$1$', one obtains $(2,1,2,1)$ or $(1,2,1,2)$ which are equivalent with
respect to $\sim$. The sets of positive roots are (type $B_2$ and $C_2$):
$\{(0,1),(1,2),(1,1),(1,0)\}$, $\{(0,1),(1,1),(2,1),(1,0)\}$.
\begin{center}
\setlength{\unitlength}{2947sp}%
\begingroup\makeatletter\ifx\SetFigFont\undefined%
\gdef\SetFigFont#1#2#3#4#5{%
  \reset@font\fontsize{#1}{#2pt}%
  \fontfamily{#3}\fontseries{#4}\fontshape{#5}%
  \selectfont}%
\fi\endgroup%
\begin{picture}(4217,1035)(451,-1786)
\thinlines
{\put(601,-1561){\line( 3, 2){900}}
\put(1501,-961){\line( 3,-2){900}}
\put(2401,-1561){\line(-1, 0){1800}}
}%
{\put(601,-1561){\line( 1, 2){300}}
\put(901,-961){\line( 1, 0){600}}
}%
{\put(2701,-1561){\line( 3, 2){900}}
\put(3601,-961){\line( 3,-2){900}}
\put(4501,-1561){\line(-1, 0){1800}}
}%
{\put(3601,-961){\line( 1, 0){600}}
\put(4201,-961){\line( 1,-2){300}}
}%
\put(751,-886){\makebox(0,0)[lb]{\smash{{\SetFigFont{12}{14.4}{\rmdefault}{\mddefault}{\updefault}{2}%
}}}}
\put(1501,-886){\makebox(0,0)[lb]{\smash{{\SetFigFont{12}{14.4}{\rmdefault}{\mddefault}{\updefault}{3}%
}}}}
\put(451,-1786){\makebox(0,0)[lb]{\smash{{\SetFigFont{12}{14.4}{\rmdefault}{\mddefault}{\updefault}{1}%
}}}}
\put(2326,-1786){\makebox(0,0)[lb]{\smash{{\SetFigFont{12}{14.4}{\rmdefault}{\mddefault}{\updefault}{4}%
}}}}
\put(2626,-1786){\makebox(0,0)[lb]{\smash{{\SetFigFont{12}{14.4}{\rmdefault}{\mddefault}{\updefault}{1}%
}}}}
\put(4576,-1786){\makebox(0,0)[lb]{\smash{{\SetFigFont{12}{14.4}{\rmdefault}{\mddefault}{\updefault}{4}%
}}}}
\put(4276,-886){\makebox(0,0)[lb]{\smash{{\SetFigFont{12}{14.4}{\rmdefault}{\mddefault}{\updefault}{3}%
}}}}
\put(3526,-886){\makebox(0,0)[lb]{\smash{{\SetFigFont{12}{14.4}{\rmdefault}{\mddefault}{\updefault}{2}%
}}}}
\end{picture}%
\end{center}
\item With five positive roots: There is only one triangulation of a convex
    pentagon up to operation of the dihedral group.
\begin{center}
\setlength{\unitlength}{1947sp}%
\begingroup\makeatletter\ifx\SetFigFont\undefined%
\gdef\SetFigFont#1#2#3#4#5{%
  \reset@font\fontsize{#1}{#2pt}%
  \fontfamily{#3}\fontseries{#4}\fontshape{#5}%
  \selectfont}%
\fi\endgroup%
\begin{picture}(2792,2160)(451,-2311)
\thinlines
{\put(1201,-2161){\line(-1, 2){600}}
\put(601,-961){\line( 2, 1){1200}}
\put(1801,-361){\line( 2,-1){1200}}
\put(3001,-961){\line(-1,-2){600}}
\put(2401,-2161){\line(-1, 0){1200}}
}%
{\put(1201,-2161){\line( 3, 2){1800}}
}%
{\put(1201,-2161){\line( 1, 3){600}}
}%
\put(876,-2311){\makebox(0,0)[lb]{\smash{{\SetFigFont{12}{14.4}{\rmdefault}{\mddefault}{\updefault}{1}%
}}}}
\put(351,-961){\makebox(0,0)[lb]{\smash{{\SetFigFont{12}{14.4}{\rmdefault}{\mddefault}{\updefault}{2}%
}}}}
\put(3151,-961){\makebox(0,0)[lb]{\smash{{\SetFigFont{12}{14.4}{\rmdefault}{\mddefault}{\updefault}{4}%
}}}}
\put(2576,-2236){\makebox(0,0)[lb]{\smash{{\SetFigFont{12}{14.4}{\rmdefault}{\mddefault}{\updefault}{5}%
}}}}
\put(1726,-286){\makebox(0,0)[lb]{\smash{{\SetFigFont{12}{14.4}{\rmdefault}{\mddefault}{\updefault}{3}%
}}}}
\end{picture}%
\end{center}
The corresponding sets of positive roots are
\begin{align*}
    &\{(0,1),(1,3),(1,2),(1,1),(1,0)\}
    \text{ (assoc. to the given triangulation)},\\
    &\{(0,1),(1,1),(2,1),(3,1),(1,0)\},
    \{(0,1),(1,2),(2,3),(1,1),(1,0)\},\\
    &\{(0,1),(1,2),(1,1),(2,1),(1,0)\},
    \{(0,1),(1,1),(3,2),(2,1),(1,0)\}.
\end{align*}
\end{enumerate}
\end{examp}

\subsection{Finite rank two Weyl groupoids}

After considering simply connected Cartan schemes, we may
now investigate the quotients of these universal coverings.
Coverings of Cartan schemes have been treated in \cite[Sect.~3]{p-CH09-2}.
We recall the concept from \cite[Def.~3.1]{p-CH09-2}:

\begin{defin} \label{de:cover}
  Let $\cC =\cC (I,A,(\rfl _i)_{i\in I},(C^a)_{a\in A})$ and
  $\cC' =\cC' (I,A'$, $(\rfl '_i)_{i\in I},(C'^a)_{a\in A'})$ be
  connected Cartan schemes.
  Let $\pi :A'\to A$ be a map such that $C^{\pi (a)}=C'^a$
  for all $a\in A'$ and the diagrams
  \begin{align}
    \begin{CD}
      A' & @>\rfl '_i>> & A' \\
      @V\pi VV & & @VV\pi V\\
      A & @>>\rfl _i > & A
    \end{CD}
    \label{eq:pirfl}
  \end{align}
  commute for all $i\in I$. We say that $\pi :\cC '\to \cC $
  is a \textit{covering}, and that $\cC '$ is a \textit{covering of} $\cC $.
\end{defin}

By \cite[Prop.~3.4]{p-CH09-2} and \cite[Cor.~3.6]{p-CH09-2},
coverings behave similarly as for example in topology:
If $\cC$ is a Cartan scheme and $a$ an object, then for each subgroup
$U\le \End(a)$ there exists up to equivalence a unique Cartan scheme $\cC'$,
a covering $\pi : \cC'\rightarrow \cC$ and an object $b$ of $\cC'$ such
that $\End(b)$ and $U$ are isomorphic via $\pi$ (see the definition
of the covariant functor $F_\pi$ in \cite[Sect.~3]{p-CH09-2}).
In particular if $U$ is the trivial subgroup, then we obtain the
universal covering.

Let $n\in \NN $ and let
$\cC$ be a connected simply connected Cartan scheme of rank two with
finite root system and $2n$ objects.
By Definition~\ref{de:cover}, to obtain all coverings $\cC\rightarrow \cC'$,
we have to determine the pairs of objects $a,b$ of $\cC$ for which
$\Phi(i,a)=\Phi(i,b)$ for some (equivalently all) $i\in I$.

Consider the action of the dihedral group $\DG_{2n}$ on the
object change diagram of $\cC $ and write
$\DG_{2n} = Z_{2n}\rtimes Z_2$, where $Z_k$ is the cyclic group
with $k$ elements; let $\tau$ be the generator of the $Z_2$ part
on the right and $\mu$ a generator of $Z_{2n}$ rotating
the object change diagram by one generating morphism.
Let $H$ be the following subgroup of $\DG_{2n}$:
\begin{align}
  H = \{ \mu^{2k}, \tau\mu^{2k+1} \in \DG_{2n} \mid k\in\ZZ\}.
  \label{eq:H}
\end{align}

The group $\End(a')$ for an object $a'$ of $\cC'$ will at most be a subgroup
of $H$.
Indeed, let $i\in I$ and $a,b\in A$ with $\Phi (i,a)=\Phi (i,b)$. Then
either the distance between $a$ and $b$ in the object change diagram is even,
then we just rotate the object change diagram; or the distance is odd,
then we rotate and change direction.

We determine first the periodicity of an $\eta$-sequence.
If $c$ is a sequence, let $c^r$ denote the $r$ times repeated
sequence $c$.
\begin{propo}\label{eta_pow}
Let $c$ be an $\eta$-sequence. Write $c=(a_1,\ldots,a_n)^r$ for
suitable $r,n,a_1,\ldots,a_n\in\NN$. Then $r\in \{1,2,3\}$.
\end{propo}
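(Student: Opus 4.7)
The plan is to exploit that every matrix $\eta (x)$ has determinant one, so writing
$M:=\eta (a_1)\eta (a_2)\cdots \eta (a_n)\in \mathrm{SL}(2,\ZZ )$,
the $\eta $-sequence condition on $c=(a_1,\ldots ,a_n)^r$ collapses to the single matrix equation $M^r=-\id $. In particular $M$ has finite order and $-\id \in \langle M\rangle $.

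I then invoke the classification of torsion in $\mathrm{SL}(2,\ZZ )$: a finite-order element has trace in $\{-2,-1,0,1,2\}$ by Cayley--Hamilton, and the possible orders are $1,2,3,4,6$. Since $-\id $ has order two and must lie in $\langle M\rangle $, the order of $M$ is even, so $\ord (M)\in \{2,4,6\}$; the smallest positive exponent $k$ with $M^k=-\id $ is then $1$, $2$, $3$, respectively.

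The last ingredient is the positivity clause in the definition of $\cAp $: for every $i<nr$ the partial product $\eta (c_1)\cdots \eta (c_i)$ has non-negative first column. At the positions $i=kn$ with $1\le k<r$ this partial product is exactly $M^k$, whereas the first column of $-\id $ is $(-1,0)^\trans $ and has a negative entry, so $M^k\neq -\id $ for every $1\le k<r$. Feeding this back into the three cases yields: if $\ord (M)=2$, then $M=-\id $ itself would appear at position $n<nr$ unless $r=1$; if $\ord (M)=4$, then $M^2=-\id $ would appear at position $2n<nr$ unless $r=2$; and if $\ord (M)=6$, then $M^3=-\id $ would appear at position $3n<nr$ unless $r=3$. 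In all three cases $r\in \{1,2,3\}$, as desired.

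The only non-elementary step is the classification of torsion in $\mathrm{SL}(2,\ZZ )$, which I would either cite or derive in one line from the characteristic polynomial; the remaining bookkeeping of partial products is routine, and I do not anticipate a real obstacle.
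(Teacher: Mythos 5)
Your proof is correct, but it takes a genuinely different route from the paper's. The paper argues by induction on the block length $n$, using the recursive structure of $\eta$-sequences: every $\eta$-sequence contains a $1$, and deleting it via the inverse of the insertion move reduces $(a_1,\ldots,a_n)^r$ to a shorter $r$-fold repetition; the base cases $n=1$ and $n=2$ are then settled by a somewhat fiddly explicit manipulation of sequences such as $(a,1)^r$. You instead pass through the matrix characterization of $\cAp$ (which the paper makes available when it identifies $\cAp$ with the set of $\eta$-sequences): setting $M=\eta (a_1)\cdots \eta (a_n)\in \mathrm{SL}(2,\ZZ )$, the condition becomes $M^r=-\id $, the classification of torsion in $\mathrm{SL}(2,\ZZ )$ forces $\ord M\in \{2,4,6\}$ with $-\id =M^{\ord M/2}$ the unique involution of $\langle M\rangle $, and the non-negativity of the first column of the partial products at positions $kn<nr$ rules out $r>\ord M/2$. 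All steps check out: the partial product at position $kn$ really is $M^k$, and $(-1,0)^{\trans }$ violates the positivity clause, so $r=\ord M/2\in \{1,2,3\}$. Your argument is shorter, avoids the case analysis for $n\le 2$, and gives slightly more information (it identifies $r$ exactly as half the order of $M$ and explains the three values as the even torsion orders in $\mathrm{SL}(2,\ZZ )$); the cost is the reliance on the equivalence between the recursive definition of $\eta$-sequences and the matrix-theoretic definition of $\cAp $, together with the standard (citable) torsion classification, whereas the paper's induction stays entirely inside the combinatorial calculus of insertion moves it has already set up.
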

\begin{proof}
Observe first that by Definition~\ref{eta_seq}
an $\eta$-sequence always contains a $1$.
If $n=1$, then $c=(1,1,1)$ again by Definition~\ref{eta_seq},
so $r=3$.

Assume that $n=2$.
Then $c=(a,1)^r$ for some $a\in\NN$ and $a\ne 1$ since
two consecutive $1$'s may never appear by construction unless
the sequence is $(1,1,1)$.
If $r=2k$ then
\[ c=(a,1,a,1)^k\approx (a-1,a-1,1)^k \sim (a-1,1,a-1)^k. \]
Thus if $a-1=1$ then $r=2$ and $c=(2,1,2,1)$. Otherwise $a>2$ and
$(a-1,1,a-1)^k \approx (a-2,a-2)^k$
which is a contradiction.
If $r=2k+1$ for some $k\ge 1$,
then $c=(a,1,a,1)^k(a,1)\approx (a-1,a-1,1)^k(a,1)$
and therefore $a>2$. The sequence $(a-1,a-1,1)^k(a,1)$ is equivalent to
\[ (a-1,1,a-1)^{k-1}(a-1,1,a,1,a-1) \approx (a-2)^{2k+1} \]
and hence $a=3$, $k=1$, i.e.~$c=(3,1,3,1,3,1)$ and $r=3$.

Assume now that $n>2$. Then without loss of generality $a_2=1$ and
$c \approx (a_1-1,a_3-1,a_4,\ldots,a_n)^r$ has length $r(n-1)$.
Hence we conclude by induction on $n$ that $r\in \{1,2,3\}$.
\end{proof}

Call an $\eta$-sequence $c=(a_1,\ldots,a_n)$ of ``{\it type 1}'' if there exists
$k$ such that $(a_k,\ldots,a_n,a_1,\ldots,a_{k-1})=
(a_k,a_{k-1},\ldots,a_1,a_n,\ldots,a_{k+1})$.
Otherwise, we say that $c$ is of ``{\it type 2}''. We obtain:

\begin{propo}\label{end_aa}
Let $\cC$ be a connected simply connected Cartan scheme of
rank two. Assume that $\rsC \re (\cC )$ is a finite irreducible root system.
Let $a\in A$ and $c=\Phi(1,a)$.
Let $\pi : \cC\rightarrow \cC'$ be the covering for which
$\End(\pi(a))$ is maximal.
Write $c=d^r$ for a sequence $d$ and $r\in\NN$ such that $r$ is maximal.
Let $m=|A|/2r$ be the length of $d$.
Then $\End(\pi(a))$ is given by the following subgroup
of $H$ (see Equation~\eqref{eq:H}):
\begin{center}
\begin{tabular}{ l | c c c }
$m$ even:\\
$c$ & $r=1$ & $r=2$ & $r=3$ \\
\hline
type 1 & $Z_2\times Z_2$ & $\DG_4$ & $\DG_6$ \\
type 2 & $Z_2=\langle\mu^n\rangle$ & $Z_4=\langle\mu^\frac{n}{2}\rangle$ &
$Z_6=\langle\mu^\frac{n}{3}\rangle$ \vspace{5pt}\\
$m$ odd:\\
$c$ & $r=1$ & $r=2$ & $r=3$ \\
\hline
type 1 & $Z_2$ & $Z_2\times Z_2$ & $Z_3 \rtimes Z_2$ \\
type 2 & $1$ & $Z_2=\langle\mu^n\rangle$ & $Z_3=\langle\mu^\frac{2n}{3}\rangle$
\end{tabular}
\end{center}
\end{propo}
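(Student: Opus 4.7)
The plan is to identify $\End(\pi(a))$ with the subgroup of $H$ preserving the periodic sequence of Cartan entries around the $2n$-cycle. By the discussion preceding the proposition we already know $\End(\pi(a))\subseteq H$, and an element of $H$ lies in $\End(\pi(a))$ exactly when the associated symmetry of the object change diagram preserves $\Phi$. Using the encoding $\Phi(1,a)=(c_1,\ldots,c_n)$ together with the identity $c_{n+s}=c_s$ from Lemma~\ref{cent_sym}, this reduces to asking which rotations and reflections of the doubled length-$2n$ sequence $(c_1,\ldots,c_n,c_1,\ldots,c_n)$ arise in $H$. The problem then splits cleanly into a rotation analysis and a reflection analysis.

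For rotations, maximality of $r$ in $c=d^r$ shows that the doubled sequence has period exactly $m=n/r$: a smaller period of the doubled sequence would force a smaller period of $c$ itself, contradicting maximality of $r$. A rotation $\mu^{2k}\in H$ preserves the sequence iff $2k\equiv 0\pmod m$ in $\ZZ/2n$. If $m$ is even, this holds for $k\in (m/2)\ZZ/n$, so the rotation subgroup is $\langle\mu^m\rangle$ of order $2r$, which can be written with generator $\mu^{n/r}$. If $m$ is odd, then $\gcd(m,2)=1$ forces $m\mid k$, giving $\langle\mu^{2m}\rangle$ of order $r$. In each of the six cases, using $n=mr$ together with Proposition~\ref{eta_pow} which restricts $r\in\{1,2,3\}$, this reproduces the cyclic groups and generators listed in the table.

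For reflections, an element $\tau\mu^{2k+1}\in H$ acts on the object change diagram by simultaneously reflecting the $2n$-cycle and swapping the two edge labels, which at the level of the $\eta$-sequence amounts to reversing $c$ after a cyclic shift. Such an element lies in $\End(\pi(a))$ precisely when some cyclic shift of $c$ agrees with its reverse in the sense of the type~1 condition. Hence if $c$ is of type~2, no reflection is present and $\End(\pi(a))$ is exactly the rotation subgroup computed above, yielding the cyclic groups in the table. If $c$ is of type~1, a single reflection combined with the rotation subgroup of order $r'$ (equal to $2r$ or $r$ according to the parity of $m$) generates, via the standard relation $sRs^{-1}=R$ in a dihedral extension, a dihedral group of order $2r'$; substituting yields $Z_2\times Z_2$, $\DG_4$, $\DG_6$ for $m$ even and $Z_2$, $Z_2\times Z_2$, $Z_3\rtimes Z_2$ for $m$ odd, matching the table for $r=1,2,3$.

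The main technical obstacle is the parity-of-$m$ case analysis: one must verify that the doubled sequence does not acquire a smaller period than $m$ (using maximality of $r$) and then correctly translate the congruence "$m\mid 2k$ in $\ZZ/2n$" into generators of $H$, where the even and odd cases produce genuinely different cyclic subgroups. A secondary subtlety is showing that the type~1 condition precisely corresponds to the existence of a reflection in $H$ (not merely in $\DG_{2n}$): this uses that $H$ is designed to contain the label-swap $\tau$ paired with odd rotations, which is exactly what is needed to absorb the reversal of labels forced by reading the $\eta$-sequence backwards.
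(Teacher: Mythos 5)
Your proposal is correct and takes essentially the same route as the paper: the paper's own proof consists only of the reduction to computing the stabilizer in $H$ of the doubled sequence $c^2=d^{2r}$ (with $r\in\{1,2,3\}$ by Proposition~\ref{eta_pow}), and your rotation/reflection case analysis is precisely the omitted verification of the table. One small slip: the dihedral relation should read $s\rho s^{-1}=\rho^{-1}$ rather than $sRs^{-1}=R$, but your conclusion that one obtains a dihedral group of order $2r'$ is still correct, since any subgroup of $\DG_{2n}$ containing a reflection and a rotation subgroup is dihedral.
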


\begin{proof}
Remember that if the $\eta$-sequence $c$ is of length $n$ then
we have $2n$ objects and the Cartan entries used for the $2n$
generating morphisms are the entries of $c^2$.
So by Proposition~\ref{eta_seq}, what we are looking for are the elements
$\sigma$ of $H$ fixing the sequence $c^2=d^{2r}$ where
$r\in\{1,2,3\}$.
\end{proof}

\begin{remar}
To construct all Cartan schemes for which $\cC$ is a covering,
proceed in the following way: Choose a subgroup $U$ of the group
given in Proposition~\ref{end_aa}, and view it as a subgroup of $\DG_{2n}$
as explained above Proposition~\ref{eta_pow},
so $U$ acts on the set $A$ of objects of $\cC$.
Define $A'$ to be the set of orbits of $U$ on $A$, and
$\pi : A \rightarrow A'$ maps an object to its orbit.
Then set $C'^{\pi(a)}=C^a$ and $\rho'_i$ are given by the
commutative diagram in Definition~\ref{de:cover}.
The quadruple $\cC' =\cC' (I,A'$, $(\rfl '_i)_{i\in I},(C'^a)_{a\in A'})$
is then a Cartan scheme with finite root system and
$\cC \rightarrow \cC'$ is a covering as defined in Definition~\ref{de:cover}.
\end{remar}

\section{Finite reflection groupoids and the Grassmannians}\label{rank_two}

Let $n\in \NN _{\ge 3}$.
Recall from \cite{FR07} that the cluster algebra of type $A_n$ is
constructed in the following way: Start with a convex $n$-gon and
choose a triangulation by non-intersecting diagonals.
Label these diagonals by $x_1,\ldots,x_{n-3}$,
the edges by $q_1,\ldots,q_n$ and the vertices of the $n$-gon
by $1,\ldots,n$.
The labels for the remaining diagonals are obtained by flipping diagonals;
two adjacent triangles $(x,a,b)$ and $(x,c,d)$ yield a
relation for the other diagonal $y$ (see Figure \ref{fi:flip}):
\[ xy = ac+bd. \]
\begin{figure}
\setlength{\unitlength}{3047sp}%
\begingroup\makeatletter\ifx\SetFigFont\undefined%
\gdef\SetFigFont#1#2#3#4#5{%
  \reset@font\fontsize{#1}{#2pt}%
  \fontfamily{#3}\fontseries{#4}\fontshape{#5}%
  \selectfont}%
\fi\endgroup%
\begin{picture}(3392,2485)(1576,-3061)
\thinlines
{\put(1801,-961){\line( 1, 0){3000}}
\put(4801,-961){\line( 0,-1){1800}}
\put(4801,-2761){\line(-1, 0){3000}}
\put(1801,-2761){\line( 0, 1){1800}}
\put(1801,-961){\line( 5,-3){3000}}
}%
{\put(4801,-961){\line(-5,-3){3000}}
}%
\put(1576,-1786){\makebox(0,0)[lb]{\smash{{\SetFigFont{12}{14.4}{\rmdefault}{\mddefault}{\updefault}{$a$}%
}}}}
\put(3226,-811){\makebox(0,0)[lb]{\smash{{\SetFigFont{12}{14.4}{\rmdefault}{\mddefault}{\updefault}{$b$}%
}}}}
\put(4876,-1936){\makebox(0,0)[lb]{\smash{{\SetFigFont{12}{14.4}{\rmdefault}{\mddefault}{\updefault}{$c$}%
}}}}
\put(3226,-3061){\makebox(0,0)[lb]{\smash{{\SetFigFont{12}{14.4}{\rmdefault}{\mddefault}{\updefault}{$d$}%
}}}}
\put(2476,-2161){\makebox(0,0)[lb]{\smash{{\SetFigFont{12}{14.4}{\rmdefault}{\mddefault}{\updefault}{$x$}%
}}}}
\put(4051,-2161){\makebox(0,0)[lb]{\smash{{\SetFigFont{12}{14.4}{\rmdefault}{\mddefault}{\updefault}{$y$}%
}}}}
\end{picture}%
  \caption{Diagonal flip}
  \label{fi:flip}
\end{figure}
The cluster algebra of type $A_{n-3}$ is the subalgebra of the
rational function field in $x_1,\ldots,x_{n-3},q_1,\ldots,q_n$ generated by
all the labels in the $n$-gon. In a simplified version, which will be our
point of view in this text, the variables $q_1,\dots,q_n$ are specialized to
$1$.

Another description is by the Grassmannian $\Gr(2,n)$
(see \cite[Lecture 3]{FR07}):
Take a matrix $z=(z_{ij})\in K^{2\times n} $. For $1\le k<l\le n$, let
\[ P_{k,l} =
\det \begin{pmatrix} z_{1k} & z_{1l} \\ z_{2k} & z_{2l} \end{pmatrix}. \]
These are the Pl\"ucker coordinates of the row span of $z$ as an element
of the Grassmannian $\Gr(2,n)$. They satisfy the relations
\begin{equation} \label{eq:Plu}
  P_{i,k}P_{j,l} = P_{i,j}P_{k,l}+P_{i,l}P_{j,k}
\end{equation}
for all $i,j,k,l\in \{1,\dots,n\}$ with $i<j<k<l$.
By assigning the value of $P_{i,j}$ to the variable $x_k$ associated
to the segment $(i,j)$ of the $n$-gon,
it follows by induction that the rational function
associated to every diagonal is equal to the corresponding $P_{k,l}$.

Let ${\mathcal O}(\Gr(2,n))$ be the homogeneous coordinate ring
of $\Gr(2,n)$. Equivalently, it is the
commutative $K$-algebra generated by elements
$P_{i,j}$ with $1\le i<j\le n$ and the Pl\"ucker relations above. Define
$P_{i,j}=P_{j,i}$ whenever $1\le j<i\le n$, and consider the indices of the
Pl\"ucker coordinates as elements of the cyclic group $Z_n$. Let
\[ \cG(2,n)={\mathcal O}(\Gr(2,n))/I,\quad
I=(P_{i,i+1}-1\mid i=1,\ldots,n).\]

\begin{lemma}\label{grass_gen}
The algebra $\cG(2,n)$ is generated by the elements
\[ P_{1,3},P_{2,4},\ldots,P_{n-2,n}.\]
\end{lemma}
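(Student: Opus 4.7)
The plan is to show that every algebra generator $P_{i,j}$ of $\cG(2,n)$ (with $1 \le i < j \le n$) lies in the subalgebra $R$ generated by $a_k := P_{k,k+2}$, $k = 1,\ldots,n-2$.

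The key observation is to pick a convenient instance of the Plücker relation~\eqref{eq:Plu}. Taking the four indices to be $i,\,l-2,\,l-1,\,l$, valid whenever $1 \le i$ and $i + 3 \le l \le n$, yields
\[
  P_{i,l-1}\,P_{l-2,l} \;=\; P_{i,l-2}\,P_{l-1,l} \;+\; P_{i,l}\,P_{l-2,l-1}.
\]
Because $P_{l-2,l-1} = P_{l-1,l} = 1$ in $\cG(2,n)$ and $P_{l-2,l} = a_{l-2}$, this collapses to the three-term recursion
\[
  P_{i,l} \;=\; a_{l-2}\,P_{i,l-1} \;-\; P_{i,l-2}. \qquad (\ast)
\]

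With $(\ast)$ in hand, I would do induction on $l-i$. The base cases are $l = i+1$, where $P_{i,i+1} = 1 \in R$, and $l = i+2$, where either $i \le n-2$ and $P_{i,i+2} = a_i \in R$ by definition, or else $(i,l) = (n-1,n)$, which is an edge. For the inductive step $l - i \ge 3$, the identity $(\ast)$ expresses $P_{i,l}$ as a polynomial in $a_{l-2}$, $P_{i,l-1}$, and $P_{i,l-2}$; since $l \le n$ forces $l-2 \le n-2$, the coefficient $a_{l-2}$ is one of the prescribed generators, and the two Plücker coordinates on the right lie in $R$ by induction.

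This covers every $P_{i,j}$ with $1 \le i < j \le n$, which together with the symmetry $P_{i,j} = P_{j,i}$ exhausts all algebra generators of $\cG(2,n)$. The only real content of the argument is spotting the right specialization of the Plücker relation; once $(\ast)$ is written down, the induction is automatic, so I do not expect any serious obstacle.
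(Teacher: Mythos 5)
Your proof is correct and takes essentially the same approach as the paper: the paper applies the Pl\"ucker relation to the indices $i,i+1,i+2,j$ to obtain $P_{i,j}=P_{i,i+2}P_{i+1,j}-P_{i+2,j}$ and inducts on $j-i$, which is the mirror image of your recursion $(\ast)$ anchored at the other end of the index pair. (Your parenthetical about $(i,l)=(n-1,n)$ in the $l=i+2$ base case is a harmless slip, since that pair has $l-i=1$ and is already covered by the edge case.)
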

\begin{proof}
    Let $\mathcal{G}$ be the subalgebra of $\mathcal{G}(2,n)$ generated by
    the elements $P_{i,i+2}$ with $1\le i\le n-2$.
    Let $i,j\in \{1,\dots,n\}$ with $i+2<j$. Then
$P_{i,i+1} P_{i+2,j}+P_{i,j}P_{i+1,i+2} = P_{i,i+2}P_{i+1,j}$,
that is
\[ P_{i,j} = P_{i,i+2} P_{i+1,j} - P_{i+2,j}. \]
Hence $P_{i,j}\in \mathcal{G}$ for all $i,j\in \{1,\dots,n\}$ with $i+1<j$
by induction on $j-i$. This proves the claim.
\end{proof}

Let $\cC $ be a connected simply connected Cartan scheme of rank two
such that $\rsC \re (\cC )$ is a finite irreducible root system.
Assume that $n=|A|/2$ and that $I=\{1,2\}$.
Recall that
\[
  \tau =
  \begin{pmatrix}
    0 & 1 \\ 1 & 0
  \end{pmatrix}, \quad
  \eta (x)=
  \begin{pmatrix}
    x & -1 \\ 1 & 0
  \end{pmatrix}
\quad
\]
for all $x\in \ZZ $.
Then
\[ \sigma_1^a = \begin{pmatrix} -1 & -c_{12}^a \\ 0 & 1 \end{pmatrix}
    = \eta(-c_{12}^a)\tau, \]
    \[ \sigma_2^a = \begin{pmatrix} 1 & 0 \\ -c^a_{21} & -1 \end{pmatrix}
	= \tau \eta(-c^a_{21}) \]
for all $a\in A$, where the matrices are given with respect
to the standard basis of $\ZZ ^2$.
Since $\tau^2=\id$,
\begin{align*}
\Prod(n; a,1,2)\tau =& \ldots \sigma _1\sigma_2 \sigma^a_1\tau \\
=& \ldots \sigma _1\tau \tau \sigma_2 \sigma^a_1\tau \\
=& \tau^{n-1} \eta(c_n) \ldots \eta(c_2) \eta(c_1)
= -\tau^{n-1}
\end{align*}
for all $a\in A$,
where the last equation follows from \cite[Prop.\,6.5]{p-CH09-2}.
This argument shows that Axiom \eqref{axiomF} is valid
for $\cC $:

\begin{corol}
Let $\cC $ be a connected simply connected Cartan scheme of rank two.
Assume that $\rsC \re (\cC )$ is a finite irreducible root system
of type $\cC $.
Then $\Wg (\cC )$ is a finite reflection groupoid.
\end{corol}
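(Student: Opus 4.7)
The plan is to verify axiom \eqref{axiomF}, i.e.\ that $\Prod(n;a,i,j) = -\tau^n$ holds for every $a \in A$ and every ordered pair $(i,j)$ of distinct elements of $I$, where $n = |A|/2$. First I would observe that by the classification assembled in Section~\ref{class_Weyl2}, the object change diagram of such a $\cC$ is a cycle with an even number $2n$ of vertices, so the notion of a finite reflection groupoid is applicable and the statement of \eqref{axiomF} is meaningful.

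The main step is precisely the calculation sketched in the paragraph immediately preceding the statement, which I would spell out carefully. Using the factorizations $\sigma_1^a = \eta(-c_{12}^a)\tau$ and $\sigma_2^a = \tau\eta(-c_{21}^a)$, I would insert copies of $\tau^2 = \id$ between consecutive reflection factors in $\Prod(n;a,1,2)\cdot\tau$ and telescope the expression into the form $\tau^{n-1}\eta(c_n)\cdots\eta(c_1)$, where $(c_1,\ldots,c_n) = \Phi(1,a)$. By Proposition~6.5 of \cite{p-CH09-2}, this tuple belongs to $\cAp$; combining the defining identity $\eta(c_1)\cdots\eta(c_n) = -\id$ with the closure of $\cAp$ under reversal (property 5.3(3) of loc.\ cit., reflected in Definition~\ref{eta_seq}\eqref{eta2}) yields $\eta(c_n)\cdots\eta(c_1) = -\id$ as well. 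Multiplying by $\tau$ on the right then delivers $\Prod(n;a,1,2) = -\tau^n$.

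The companion identity $\Prod(n;a,2,1) = -\tau^n$ follows by the identical argument with the roles of the two generators exchanged, using $\Phi(2,a)$ in place of $\Phi(1,a)$. The only mild piece of bookkeeping is to confirm that the sequence of Cartan entries produced by the telescoping really agrees with $\Phi(i,a)$ in the correct order; this is immediate from Lemma~\ref{cent_sym} together with the definition \eqref{map_Phi} of $\Phi$. I do not expect any substantive obstacle here, since all of the technical content has already been compiled in Section~\ref{class_Weyl2} and in \cite{p-CH09-2}, and the corollary amounts to assembling these ingredients into a one-line matrix identity.
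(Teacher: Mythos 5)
Your proposal is correct and follows essentially the same route as the paper: the paper proves this corollary by exactly the computation you describe, factoring $\sigma_1^a=\eta(-c_{12}^a)\tau$ and $\sigma_2^a=\tau\eta(-c_{21}^a)$, inserting $\tau^2=\id$ to telescope $\Prod(n;a,1,2)\tau$ into $\tau^{n-1}\eta(c_n)\cdots\eta(c_1)$, and invoking \cite[Prop.~6.5]{p-CH09-2} for the identity $\eta(c_n)\cdots\eta(c_1)=-\id$. Your extra care about the order of the $\eta$-factors (via closure of $\cAp$ under reversal) and about the second ordering $(2,1)$ only makes explicit what the paper leaves implicit.
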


We now formulate our main result on reflection groupoids
in purely algebraic terms. Recall that $n\in \NN _{\ge 3}$.

\begin{theor}\label{thm:main}
Let $J$ be the ideal of $R=K[c_1,\ldots,c_n]$
generated by the coefficients of the $2\times 2$-matrix
\begin{equation}\label{eta_id}
\eta(c_n) \ldots \eta(c_2) \eta(c_1) + \id.
\end{equation}
There is a unique algebra isomorphism
\[ \bar{\varphi } :\, R/J \rightarrow \cG(2,n), \quad
c_i \mapsto P_{i,i+2}. \]
The inverse of $\bar{\varphi }$ can be defined by
\[ \bar{\varphi }^{-1}(P_{i,j})=(\eta (c_{j-2})\cdots \eta (c_{i+1})
\eta (c_i))_{11} \quad \text{for all $1\le i<j\le n$,}
\]
where $(\cdot )_{11}$ denotes the entry in the first row and first column.
\end{theor}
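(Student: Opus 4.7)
My plan is to build $\bar{\varphi}$ and its putative inverse in parallel; both well-definedness checks reduce to a single matrix-entry identity together with a polynomial Pl\"ucker identity. I introduce $a_{i,j}\in R$ for $0\le j-i$ by $a_{i,i}=0$, $a_{i,i+1}=1$, and $a_{i,j+1}:=c_{j-1}a_{i,j}-a_{i,j-1}$ for $j\ge i+1$. A short induction on $j-i\ge 2$ gives the matrix identity
\[
 \eta(c_{j-2})\eta(c_{j-3})\cdots\eta(c_i) \;=\; \begin{pmatrix} a_{i,j} & -a_{i+1,j} \\ a_{i,j-1} & -a_{i+1,j-1} \end{pmatrix}
\]
in $R^{2\times 2}$; in particular, the $(1,1)$-entry equals the expression for $\bar{\varphi}^{-1}(P_{i,j})$ in the statement. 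Working in the universal realization $z_1=e_1$, $z_2=e_2$, $z_{k+1}:=c_{k-1}z_k-z_{k-1}\in R^2$, one verifies $a_{i,j}=\det(z_i,z_j)$ for $i<j$, so the four-term relation
\[
 a_{i,k}\,a_{j,l}\;=\;a_{i,j}\,a_{k,l}\,+\,a_{i,l}\,a_{j,k}\qquad (i<j<k<l)
\]
holds in $R$ as an instance of the $2\times 4$ determinantal Pl\"ucker identity.

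\medskip

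I consider $\varphi_0:R\to\cG(2,n)$ sending $c_i\mapsto P_{i,i+2}$. The Pl\"ucker relation on $1<j-1<j<j+1$ reads $P_{j-1,j+1}\,P_{1,j}=P_{1,j-1}+P_{1,j+1}$, which is precisely the recursion defining $a_{1,j}$. Induction on $j-i$ therefore yields $\varphi_0(a_{i,j})=P_{i,j}$ for all $1\le i<j\le n$. By the matrix identity, proving $\varphi_0(J)=0$ reduces to showing that $\varphi_0$ kills the four entries $a_{1,n+2}+1$, $-a_{2,n+2}$, $a_{1,n+1}$, $-a_{2,n+1}+1$. Pushing the recursion two steps past $j=n$, using the cyclic identifications $P_{n-1,n+1}=P_{1,n-1}$ and $P_{n,n+2}=P_{2,n}$, together with $P_{1,n}=P_{n,n+1}=1$ and the single auxiliary Pl\"ucker relation $P_{1,n-1}\,P_{2,n}=1+P_{2,n-1}$ on indices $1<2<n-1<n$, a direct substitution yields
\[
\varphi_0(a_{1,n+1})=0,\quad \varphi_0(a_{1,n+2})=-1,\quad \varphi_0(a_{2,n+1})=1,\quad \varphi_0(a_{2,n+2})=0,
\]
as required. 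Hence $\bar{\varphi}:R/J\to\cG(2,n)$ is well-defined, and it is surjective by Lemma~\ref{grass_gen}.

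\medskip

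For the inverse, I define $\psi:\cG(2,n)\to R/J$ on generators by $\psi(P_{i,j}):=\bar{a}_{i,j}$ for $1\le i<j\le n$. The Pl\"ucker relations are respected by the polynomial identity in the first paragraph, and $\psi(P_{i,i+1})=a_{i,i+1}=1$ for $i\le n-1$. The one nontrivial check is the cyclic boundary relation $P_{n,n+1}-1=0$, which in the symmetric convention reads $\psi(P_{1,n})=\bar{a}_{1,n}=1$. To obtain this I rewrite the generator of $J$ as $\eta(c_{n-2})\cdots\eta(c_1)\equiv -\eta(c_{n-1})^{-1}\eta(c_n)^{-1}\pmod{J}$ and expand via $\eta(c)^{-1}=\bigl(\begin{smallmatrix}0&1\\-1&c\end{smallmatrix}\bigr)$; the $(1,1)$-entry of the resulting matrix is $a_{1,n}\equiv 1$. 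Thus $\psi$ is a well-defined algebra homomorphism, and the identities $\psi\bar{\varphi}(\bar{c}_i)=\bar{a}_{i,i+2}=\bar{c}_i$ and $\bar{\varphi}\psi(P_{i,j})=\varphi_0(a_{i,j})=P_{i,j}$ (which, combined with Lemma~\ref{grass_gen}, cover all generators of $\cG(2,n)$) identify $\psi$ as the stated inverse $\bar{\varphi}^{-1}$.

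\medskip

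The delicate step is the wrap-around computation showing $\varphi_0(J)=0$: the Pl\"ucker recursion establishing $\varphi_0(a_{i,j})=P_{i,j}$ is valid only for $j\le n$, and the extension to $j=n+1,n+2$ must be forced using the cyclic relations $P_{i,i+1}=1$ together with one further Pl\"ucker relation whose indices straddle the cyclic cut of the $n$-gon. Once this wrap-around is handled, everything else is bookkeeping around the matrix identity and the polynomial Pl\"ucker identity in $R$.
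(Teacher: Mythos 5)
Your argument is correct, and it takes a partly different route from the paper's, so let me compare. For the containment $J\subseteq\ker\varphi$ the paper goes through Lemma~\ref{le:murel}, an identity for products of the matrices $\mu(P_{i,i+2},P_{i,i+1},P_{i+1,i+2})$ in the full homogeneous coordinate ring ${\mathcal O}(\Gr(2,n))$, proved by induction on $n$ using the flip identity \eqref{equ1} and a division in the integral domain; only afterwards are the edges specialized to $1$. You instead work directly in $\cG(2,n)$: you encode the entries of $\eta(c_{j-2})\cdots\eta(c_i)$ by the three-term recursion $a_{i,j+1}=c_{j-1}a_{i,j}-a_{i,j-1}$, match it against the specialized Pl\"ucker relation $P_{i,j+1}=P_{j-1,j+1}P_{i,j}-P_{i,j-1}$, and handle the wrap-around at $j=n+1,n+2$ by hand using the cyclic identifications and one extra Pl\"ucker relation on $1<2<n-1<n$; I checked the four resulting values $0,-1,1,0$ and they are right (for $n=3$ the auxiliary relation degenerates but the direct substitution still gives the claim). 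This is more elementary and avoids the induction on $n$, at the cost of losing the unspecialized statement of Lemma~\ref{le:murel}. For the inverse, your $a_{i,j}$ is exactly the paper's $\psi(P_{i,j})$, but where the paper verifies the Pl\"ucker relations for $\psi$ by the matrix-entry manipulations of Lemma~\ref{le:eta} and Equations \eqref{eq:psiprop1}--\eqref{eq:psiprop2}, you realize the $a_{i,j}$ as $2\times 2$ minors of an explicit $2\times(n+2)$ matrix over $R$, so the relations hold for free; this is essentially the idea of the paper's closing Remark, here carried out over $R$ rather than $R/J$, which is cleaner. Your verification of the cyclic relation $\psi(P_{1,n})=1$ is the same inversion trick as in the paper's Lemma~\ref{le:RJgen}.

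One small point to tighten: to conclude $\bar\psi\bar\varphi=\id_{R/J}$ you only verify $\psi\bar\varphi(\bar c_i)=\bar c_i$ for $i\le n-2$ (for $i=n-1,n$ the cyclic reduction sends $P_{i,i+2}$ to $P_{1,n-1}$ resp.\ $P_{2,n}$, whose $\psi$-images are $\bar a_{1,n-1}$ and $\bar a_{2,n}$, not literally $\bar c_{n-1}$ and $\bar c_n$). You therefore need either that $\bar c_1,\dots,\bar c_{n-2}$ generate $R/J$ (the paper's Lemma~\ref{le:RJgen}) or the congruences $a_{1,n-1}\equiv c_{n-1}$ and $a_{2,n}\equiv c_n$ modulo $J$; both are immediate from the off-diagonal entries of the very congruence $\eta(c_{n-2})\cdots\eta(c_1)\equiv-\eta(c_{n-1})^{-1}\eta(c_n)^{-1}$ that you already use for the $(1,1)$-entry, so this is a one-sentence fix rather than a gap.
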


The proof of Theorem~\ref{thm:main} uses several observations
and lemmata which are collected here.

\begin{lemma}
  \label{le:RJgen}
  Let $J$ be the ideal of $R=K[c_1,\ldots,c_n]$
  generated by the coefficients of the $2\times 2$-matrix in
  Equation~\eqref{eta_id}.
  The algebra $R/J$ is generated by the elements $c_1,\dots,c_{n-2}$.
\end{lemma}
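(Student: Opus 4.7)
The plan is to peel off the last two factors in the product defining $J$ and exploit the fact that the off-diagonal entries of $\eta(c_n)\eta(c_{n-1})$ are literally $-c_n$ and $c_{n-1}$. More precisely, I will write each of $c_{n-1}$ and $c_n$, modulo $J$, as an explicit polynomial in $c_1,\dots,c_{n-2}$.

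Set $M:=\eta(c_{n-2})\cdots \eta(c_2)\eta(c_1)$ and write $M=\begin{pmatrix}\alpha & \beta\\ \gamma & \delta\end{pmatrix}$, where $\alpha,\beta,\gamma,\delta\in K[c_1,\dots,c_{n-2}]$. Since $\det\eta(x)=1$ for every $x$, we have $\det M=1$, so $M$ is invertible and $M^{-1}=\begin{pmatrix}\delta & -\beta\\ -\gamma & \alpha\end{pmatrix}$ again has entries in $K[c_1,\dots,c_{n-2}]$.

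By the very definition of $J$, the relation $\eta(c_n)\eta(c_{n-1})M+\id\in \mathrm{Mat}_2(J)$ holds, which rewrites (multiplying on the right by $M^{-1}$) as
\[
\eta(c_n)\eta(c_{n-1})\;\equiv\;-M^{-1}\;=\;\begin{pmatrix}-\delta & \beta\\ \gamma & -\alpha\end{pmatrix}\pmod{J}.
\]
A direct computation gives $\eta(c_n)\eta(c_{n-1})=\begin{pmatrix} c_nc_{n-1}-1 & -c_n\\ c_{n-1} & -1\end{pmatrix}$. Comparing the $(1,2)$ and $(2,1)$ entries yields $c_n\equiv -\beta\pmod{J}$ and $c_{n-1}\equiv \gamma\pmod{J}$; since $\beta,\gamma\in K[c_1,\dots,c_{n-2}]$, the images of $c_{n-1}$ and $c_n$ in $R/J$ lie in the subalgebra generated by $c_1,\dots,c_{n-2}$, which proves the lemma.

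I do not expect a real obstacle here: the only choice to make is which two factors to isolate on the right, and the computation is then forced. The $(1,1)$ and $(2,2)$ entries of the same matrix equation will give two further relations (one expressing $c_nc_{n-1}-1$ in terms of $c_1,\dots,c_{n-2}$, and the normalization $\alpha\equiv 1\pmod J$), which will presumably be useful for the subsequent parts of the proof of Theorem~\ref{thm:main} but are not needed for the present generation statement.
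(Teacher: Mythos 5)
Your proof is correct and is essentially the paper's own argument: the paper likewise uses invertibility of $\eta$ to rewrite Equation~\eqref{eta_id} as $\eta(c_{n-2})\cdots\eta(c_1)=-(\eta(c_n)\eta(c_{n-1}))^{-1}$ in $R/J$ and reads off $c_{n-1}$ and $c_n$ from the off-diagonal entries. Your version merely inverts the other factor ($\eta(c_n)\eta(c_{n-1})\equiv -M^{-1}$), which is the same identity, and your explicit entry computations check out.
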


\begin{proof}
    Since $\eta (c)$ is invertible for all $c\in R$, Equation~\eqref{eta_id}
    implies that
    \begin{align*}
      \eta (c_{n-2})\cdots \eta (c_2)\eta (c_1)=
      -(\eta (c_n)\eta (c_{n-1}))^{-1}.
    \end{align*}
    Looking at the non-diagonal entries yields that $c_{n-1}$ and $c_n$
    are polynomials in $c_1,\dots,c_{n-2}$.
\end{proof}

For all $a,b,c\in K$ let
\[ \mu(a,b,c) = \begin{pmatrix} a & -b \\ c & 0 \end{pmatrix} \in K^{2\times 2}. \]
Explicit calculation shows that (see Figure \ref{fi:equ_star} for an illustration)
\begin{equation}
  \label{equ1}
  \begin{aligned}
       & \mu(f_1,a,b)\mu(f_2,b,c)\mu(f_3,c,d) f_2 = \\
       & \qquad \mu (f_1f_2-ac,ab,bf_2)\mu (f_2f_3-bd,cf_2,cd)
  \end{aligned}
\end{equation}
for all $f_1,f_2,f_3,a,b,c,d\in K$.
Equation~\eqref{equ1} is a generalization of \cite[5.2]{p-CH09-2}.
\begin{figure}
\setlength{\unitlength}{3347sp}%
\begingroup\makeatletter\ifx\SetFigFont\undefined%
\gdef\SetFigFont#1#2#3#4#5{%
  \reset@font\fontsize{#1}{#2pt}%
  \fontfamily{#3}\fontseries{#4}\fontshape{#5}%
  \selectfont}%
\fi\endgroup%
\begin{picture}(6024,2624)(1789,-3373)
\thinlines
{\put(1801,-3361){\line( 1, 3){600}}
\put(2401,-1561){\line( 4, 1){2400}}
\put(4801,-961){\line( 4,-1){2400}}
\put(7201,-1561){\line( 1,-3){600}}
\put(7801,-3361){\line(-3, 1){5400}}
\put(2401,-1561){\line( 1, 0){4800}}
\put(7201,-1561){\line(-3,-1){5400}}
\put(1801,-3361){\line( 5, 4){3000}}
\put(4801,-961){\line( 5,-4){3000}}
\put(7801,-3361){\line( 0, 1){  0}}
}%
\put(3376,-1111){\makebox(0,0)[lb]{\smash{{\SetFigFont{12}{14.4}{\rmdefault}{\mddefault}{\updefault}{$b$}%
}}}}
\put(6076,-1111){\makebox(0,0)[lb]{\smash{{\SetFigFont{12}{14.4}{\rmdefault}{\mddefault}{\updefault}{$c$}%
}}}}
\put(1951,-2236){\makebox(0,0)[lb]{\smash{{\SetFigFont{12}{14.4}{\rmdefault}{\mddefault}{\updefault}{$a$}%
}}}}
\put(7576,-2236){\makebox(0,0)[lb]{\smash{{\SetFigFont{12}{14.4}{\rmdefault}{\mddefault}{\updefault}{$d$}%
}}}}
\put(6751,-2311){\makebox(0,0)[lb]{\smash{{\SetFigFont{12}{14.4}{\rmdefault}{\mddefault}{\updefault}{$f_3$}%
}}}}
\put(2776,-2311){\makebox(0,0)[lb]{\smash{{\SetFigFont{12}{14.4}{\rmdefault}{\mddefault}{\updefault}{$f_1$}%
}}}}
\put(5226,-2986){\makebox(0,0)[lb]{\smash{{\SetFigFont{12}{14.4}{\rmdefault}{\mddefault}{\updefault}{$\frac{f_2f_3-bd}{c}$}%
}}}}
\put(3751,-2986){\makebox(0,0)[lb]{\smash{{\SetFigFont{12}{14.4}{\rmdefault}{\mddefault}{\updefault}{$\frac{f_1f_2-ac}{b}$}%
}}}}
\put(4726,-1861){\makebox(0,0)[lb]{\smash{{\SetFigFont{12}{14.4}{\rmdefault}{\mddefault}{\updefault}{$f_2$}%
}}}}
\end{picture}%
\caption{Equation \eqref{equ1}}
\label{fi:equ_star}
\end{figure}

\begin{lemma}
    \label{le:murel}
Equation
\begin{equation}\label{equ2}
\quad \prod_{i=1}^n \mu(P_{i,i+2},P_{i,i+1},P_{i+1,i+2}) =
-\prod _{i=1}^n P_{i,i+1}\,\id
\end{equation}
holds in ${\mathcal O}(\Gr(2,n))^{2\times 2}$.
\end{lemma}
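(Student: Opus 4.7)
My plan is to proceed by induction on $n\ge 3$. For the base case $n=3$, I would directly multiply the three explicit $2\times 2$ matrices $\mu(P_{1,3},P_{1,2},P_{2,3})$, $\mu(P_{1,2},P_{2,3},P_{1,3})$, $\mu(P_{2,3},P_{1,3},P_{1,2})$ (using symmetry of the Pl\"ucker symbols); a short calculation shows that the off-diagonal entries cancel and both diagonal entries equal $-P_{1,2}P_{2,3}P_{1,3}$, confirming the identity.

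For the inductive step ($n\ge 4$), I would apply Equation~\eqref{equ1} to any three consecutive factors, say $M_1M_2M_3$ with $M_i=\mu(P_{i,i+2},P_{i,i+1},P_{i+1,i+2})$, matching $(f_1,a,b,f_2,c,f_3,d)$ with $(P_{1,3},P_{1,2},P_{2,3},P_{2,4},P_{3,4},P_{3,5},P_{4,5})$. The crucial observation is that the quantities $f_1f_2-ac$ and $f_2f_3-bd$ appearing on the right-hand side of \eqref{equ1} each simplify via one application of the Pl\"ucker relation \eqref{eq:Plu}: applied respectively to the indices $(1,2,3,4)$ and $(2,3,4,5)$, they yield $f_1f_2-ac=P_{2,3}\,P_{1,4}$ and $f_2f_3-bd=P_{3,4}\,P_{2,5}$. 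Substituting and factoring these scalars out of the two resulting $\mu$-matrices gives
\[
P_{2,4}\cdot M_1M_2M_3 \;=\; P_{2,3}P_{3,4}\cdot M'_1 M'_2,
\]
where $M'_1=\mu(P_{1,4},P_{1,2},P_{2,4})$ and $M'_2=\mu(P_{2,5},P_{2,4},P_{4,5})$.

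The geometric content of this move is the removal of vertex $3$ from the $n$-gon: under the relabeling $(1,2,4,5,\dots,n)\mapsto(1,2,3,\dots,n-1)$, the tuple $(M'_1,M'_2,M_4,\dots,M_n)$ is exactly the list of factors for the corresponding cyclic product associated to the $(n-1)$-gon, whose Pl\"ucker coordinates embed naturally into $\mathcal{O}(\Gr(2,n))$. Invoking the inductive hypothesis together with the elementary identity
\[
P_{2,3}P_{3,4}\cdot\prod_{j=1}^{n-1}P^{\mathrm{new}}_{j,j+1} \;=\; P_{2,4}\cdot\prod_{i=1}^{n}P_{i,i+1}
\]
(which rewrites the old boundary product in terms of the new one) yields $P_{2,4}\cdot\prod_{i=1}^n M_i = -P_{2,4}\prod_{i=1}^n P_{i,i+1}\,\id$, and cancelling $P_{2,4}$ -- a non-zero-divisor because the homogeneous coordinate ring of the irreducible Grassmannian $\Gr(2,n)$ is an integral domain -- completes the induction.

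The step I expect to be the main obstacle is the bookkeeping in this reduction: verifying that $(M'_1,M'_2,M_4,\dots,M_n)$ really matches the $(n-1)$-gon product in the correct cyclic order after relabeling, and that the scalar prefactor $P_{2,3}P_{3,4}/P_{2,4}$ exactly accounts for the change in the boundary product. Once this is set up, the remaining work is a single substitution in \eqref{equ1} together with two invocations of a Pl\"ucker relation.
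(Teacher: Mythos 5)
Your proposal is correct and follows essentially the same route as the paper: induction on $n$, with the base case $n=3$ checked by direct multiplication, and the inductive step obtained by applying Equation~\eqref{equ1} to three consecutive factors, simplifying $f_1f_2-ac$ and $f_2f_3-bd$ via the Pl\"ucker relations to contract vertex $3$, and cancelling $P_{2,4}$ using that ${\mathcal O}(\Gr(2,n))$ is an integral domain. The only cosmetic difference is that the paper also verifies $n=4$ directly as a second base case (so the inductive step is only invoked for $n>4$, where $P_{2,5}$ is an honest Pl\"ucker coordinate with no cyclic wrap-around), whereas you handle $n=4$ inside the inductive step; your relabeling bookkeeping covers that case, so this is not a gap.
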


\begin{proof}
    We proceed by induction on $n$.
If $n=3$, then Equation~\eqref{equ2} becomes
\[ \mu(P_{1,3},P_{1,2},P_{2,3})\mu(P_{1,2},P_{2,3},P_{1,3})
\mu(P_{2,3},P_{1,3},P_{1,2})=-P_{1,2}P_{2,3}P_{1,3} \id, \]
which can be checked by explicit calculation. Similarly,
Equation~\eqref{equ2} holds for $n=4$.

Assume that $n>4$.
The subalgebra of ${\mathcal O}(\Gr(2,n))$
generated by all $P_{i,j}$ with $i,j\ne 3$ is isomorphic to
${\mathcal O}(\Gr(2,n-1))$, hence by induction hypothesis we have
\begin{equation}\label{eqn_nu}
  \begin{aligned}
  \mu(P_{1,4},P_{1,2},P_{2,4})\mu(P_{2,5},P_{2,4},P_{4,5})
  \prod_{i=4}^n \mu(P_{i,i+2},P_{i,i+1},P_{i+1,i+2})\quad &\\
  = -P_{1,2}P_{2,4}P_{4,5}\cdots P_{n,n+1} \id.&
  \end{aligned}
\end{equation}
By Equation~\eqref{equ1} and the Grassmann-Pl\"ucker relations \eqref{eq:Plu}
we have
\begin{equation} \label{eq:mm=mmm}
    \begin{aligned}
\mu(P_{1,3},P_{1,2},P_{2,3})\mu(P_{2,4},P_{2,3},P_{3,4})
\mu(P_{3,5},P_{3,4},P_{4,5}) P_{2,4} \quad &\\
= \mu(P_{1,4},P_{1,2},P_{2,4})
\mu(P_{2,5},P_{2,4},P_{4,5}) P_{2,3} P_{3,4}.&
\end{aligned}
\end{equation}
By multiplying Equation~\eqref{eqn_nu} with $P_{2,3}P_{3,4}$, inserting
Equation~\eqref{eq:mm=mmm}, and dividing by $P_{2,4}$ (${\mathcal O}(\Gr (2,n))$
is an integral domain) we obtain Equation~\eqref{equ2} which completes
the induction.
\end{proof}

\begin{lemma}
    \label{le:eta}
    Let $S$ be a ring with $1$. For all $x\in S$ define $\eta (x)\in S^{2\times 2}$
    similarly to Equation~\eqref{eq:etadef}.
    Let $x,y\in S$, $A\in S^{2\times 2}$, and $a=A_{11}$. Then
    $(\eta (x)A)_{21}=a$ and $(A\eta (y))_{12}=(\eta (x)A\eta (y))_{22}
    =-a$.
\end{lemma}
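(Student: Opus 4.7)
The statement is a direct matrix computation, so the plan is simply to expand the products and read off the relevant entries, taking advantage of the very special shape of $\eta(x)$.

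Write $A = \begin{pmatrix} a & b \\ c & d \end{pmatrix}$ with $a = A_{11}$. The key observation is that the second row of $\eta(x) = \bigl(\begin{smallmatrix} x & -1 \\ 1 & 0 \end{smallmatrix}\bigr)$ is $(1, 0)$, independently of $x$. Thus in the product $\eta(x) A$, the $(2,j)$ entry is just $A_{1j}$; in particular $(\eta(x) A)_{21} = A_{11} = a$, which is the first claim. Symmetrically, the second column of $\eta(y)$ is $(-1, 0)^{\mathrm{t}}$, so in $A\eta(y)$ the $(i,2)$ entry is $-A_{i1}$; in particular $(A\eta(y))_{12} = -A_{11} = -a$, which is the second claim.

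For the third claim, I apply both observations in succession: by the first, the second row of $\eta(x)A$ is $(a, b)$; by the second, the $(2,2)$ entry of $(\eta(x)A)\eta(y)$ equals minus the $(2,1)$ entry of $\eta(x)A$, which is $-a$. All three identities are therefore established, and no step requires more than reading off a single coordinate of a $2\times 2$ matrix product.

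The only thing to be mildly careful about is that $S$ is merely an associative ring with $1$, not necessarily commutative; but since every computation above consists of copying single entries of $A$ (and negating them) rather than multiplying entries of $A$ with $x$ or $y$, commutativity is never invoked and the argument works verbatim in the noncommutative setting. There is no genuine obstacle here: the lemma is a formal consequence of the shape of $\eta$.
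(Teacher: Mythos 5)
Your computation is correct: the paper states Lemma~\ref{le:eta} without proof, treating it as the immediate consequence of the fact that the second row of $\eta(x)$ is $(1,0)$ and the second column of $\eta(y)$ is $(-1,0)^{\trans}$, which is exactly the observation you make. Your remark that no commutativity is needed is also accurate, since each of the three entries is obtained by copying (and possibly negating) a single entry of $A$ rather than forming products with $x$ or $y$.
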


\begin{proof}[Proof of Theorem~\ref{thm:main}]
There exists a unique algebra map $\varphi :R\to \cG (2,n)$ such that
$\varphi (c_i)=P_{i,i+2}$ for all $i$.
By Lemma~\ref{grass_gen} this map is surjective.
It remains to show that $\ker \varphi =J$.
First we conclude that $\bar{\varphi }$ is well-defined,
that is, $J\subseteq \ker \varphi $.
Indeed, specializing $P_{i,i+1}$ to $1$ for all $i=1,\ldots,n$
allows us to replace the factors $\mu (P_{i,i+2},P_{i,i+1},P_{i+1,i+2})$
in Equation~\eqref{equ2} by $\eta (P_{i,i+2})$ for all $i\in \{1,2,\dots,n\}$.
Thus $\varphi (J)=0$, and hence $\bar{\varphi }$ is well-defined.

We prove the injectivity of $\bar{\varphi }$ by determining the inverse map.
First let
\begin{equation}
  \psi (P_{i,j})=
  (\eta (c_{j-2})\cdots \eta (c_{i+1})
  \eta (c_i))_{11}\in R/J
  \label{eq:psi}
\end{equation}
for all $1\le i<j\le n$.
Then $\psi (P_{i,i+1})=1$ for all $i\in \{1,2,\dots,n-1\}$
and $\psi (P_{i,i+2})=c_i$ for all $i\in \{1,2,\dots,n-2\}$.
Lemma~\ref{le:eta} implies that
\begin{equation}
    \label{eq:psiprop1}
    \begin{aligned}
  \psi (P_{j,k+1})=&\:\sum _{r=1}^2\eta (c_{k-1})_{1r}
        (\eta (c_{k-2})\cdots \eta (c_{j+1})\eta (c_j))_{r1}
    \\
    =&\:\: c_{k-1}\psi (P_{j,k})-\psi (P_{j,k-1})
\end{aligned}
\end{equation}
for all $1\le j<k<n$,
where the potential summand $\psi (P_{j,j})$ is considered as $0$.
Using Equation~\eqref{eq:psiprop1}, one shows by induction on $k$ that
\begin{equation}
    \psi (P_{i,j}) = \psi (P_{i,k})\psi (P_{j,k+1})
    -\psi (P_{j,k})\psi (P_{i,k+1})
    \label{eq:psiprop2}
\end{equation}
for all $1\le i<j\le k<n$.

Let $\psi :{\mathcal O}(\Gr(2,n))\to R/J$ be the unique algebra map where
$\psi (P_{i,j})$ is as in Equation~\eqref{eq:psi}.
We prove that $\psi $ is well-defined.
Let $i,j,k,l\in \{1,2,\dots,n\}$ with $i<j<k<l$.
Then
\begin{eqnarray*}
    \psi (P_{i,l})\psi (P_{j,k})
    &=&(\eta (c_{l-2})\cdots \eta (c_i))_{11} \psi (P_{j,k})\\
  &=&\sum _{r=1}^2(\eta (c_{l-2})\cdots \eta (c_k))_{1r}
  (\eta (c_{k-1})\cdots \eta (c_i))_{r1}\psi (P_{j,k})\\
  &=& (\psi (P_{k,l})\psi (P_{i,k+1})-\psi (P_{k+1,l})\psi (P_{i,k}))
  \psi (P_{j,k}),
\end{eqnarray*}
where the last equation follows from Lemma~\ref{le:eta}.
Apply Equation~\eqref{eq:psiprop2} to the first summand of the last expression.
Then
\begin{eqnarray*}
    \psi (P_{i,l})\psi (P_{j,k})
    &=& \psi (P_{k,l})\psi (P_{i,k})\psi (P_{j,k+1})-
    \psi (P_{k,l})\psi (P_{i,j})\\
    && -\psi (P_{k+1,l})\psi (P_{i,k})\psi (P_{j,k})\\
    &=&\psi (P_{i,k})\psi (P_{j,l})-\psi (P_{k,l})\psi (P_{i,j}),
\end{eqnarray*}
where the second equation is obtained from the definition
of $\psi (P_{j,l})$
and Lemma~\ref{le:eta}. The obtained formula proves that $\psi $ is
well-defined. Since $\psi (P_{i,i+1})=1$ for all $i\in \{1,2,\dots,n-1\}$,
it induces an algebra map $\bar{\psi }:\cG (2,n)\to R/J$. Further,
Lemma~\ref{le:RJgen}
tells that $R/J$ is
generated by the elements $c_1,\dots,c_{n-2}$. Since
$\bar{\psi }\bar{\varphi }(c_i)=\bar{\psi }(P_{i,i+2})=c_i$
for all $i\in \{1,2,\dots,n-2\}$, we conclude that
$\bar{\psi }\bar{\varphi }=\id _{R/J}$, and hence $\bar{\varphi }$ is
injective. Clearly, $\bar{\psi }$ is the inverse of $\bar{\varphi }$.
\end{proof}

\begin{remar}
Alternatively, we can prove that $J=\ker \varphi$ by constructing a
matrix $z=(z_{ij})_{i,j}\in (R/J)^{2\times n}$ such that if
\[ \detz(i,j) := \det
\begin{pmatrix} z_{1i} & z_{1j} \\ z_{2i} & z_{2j}\end{pmatrix}
\quad \mbox{for} \quad 1\le i,j\le n, \]
then $\detz(i,i+1)=1$ and $\detz(i,i+2)=c_i$ for all $i=1,\ldots,n$,
where all indices are to be read modulo $n$ and
$\detz(n-1,1)=-c_{n-1}$, $\detz(n,2)=-c_n$, $\detz(n,1)=-1$,
in the following way:

Let $\xi^{(i)} = -\eta(c_i)^{-1}\cdots\eta(c_2)^{-1}\eta(c_1)^{-1}$
and write $\bar\xi^{(i)} = (\xi^{(i)})^{-1}$; this is well-defined
over $R/J$ since $\det \eta(x) = 1$.
For $i\equiv 1 \:(\mbox{mod } 4)$ set
\begin{align*}
z_{1i} &=& \xi^{(i-1)}_{11}, \quad
z_{1\,i+1} &=& \xi^{(i)}_{11}, \quad
z_{1\,i+2} &=& -\bar\xi^{(i)}_{21}, \quad
z_{1\,i+3} &=& -\bar\xi^{(i+1)}_{21}, \\
z_{2i} &=& \xi^{(i-1)}_{12}, \quad
z_{2\,i+1} &=& \xi^{(i)}_{12}, \quad
z_{2\,i+2} &=& \bar\xi^{(i)}_{11}, \quad
z_{2\,i+3} &=& \bar\xi^{(i+1)}_{11}.
\end{align*}
(Ignore the cases for which $i$, $i+1$, $i+2$ or $i+3$ are greater than $n$.)
Several explicit calculations give that
$\detz(i,i+1)=1$ for $i=1,\ldots,n-1$
and $\detz(i,i+2)=c_i$ for $i=1,\ldots,n-2$
hold even in $R$. For
$\detz(n-1,1)=-c_{n-1}$, $\detz(n,2)=-c_n$, and $\detz(n,1)=-1$
we use Equation~\eqref{eta_id}.

Recall that the determinants satisfy the Pl\"ucker relations. Hence, once the
matrix $z$ is constructed, it follows that there is an algebra
map from $\cG (2,n)$ to $R/J$ which maps $P_{i,i+2}$ to $c_i$.
\end{remar}

\begin{remar}
Equation~\eqref{equ1} gives a rule how to produce reflection
groupoids of rank two with explicit Cartan entries: Start with a triangle and
inductively append triangles. If a new triangle is glued to an edge
with label $y$, then the new labels $a,b$ will satisfy $ab=y$.
If all edges of the $n$-gon have value $1$, then one has completed
a reflection groupoid.
\end{remar}

\bibliographystyle{amsplain}
\bibliography{rg_r2_caA}

\providecommand{\bysame}{\leavevmode\hbox to3em{\hrulefill}\thinspace}
\providecommand{\MR}{\relax\ifhmode\unskip\space\fi MR }
\providecommand{\MRhref}[2]{%
  \href{http://www.ams.org/mathscinet-getitem?mr=#1}{#2}
}
\providecommand{\href}[2]{#2}
\begin{thebibliography}{10}

\bibitem{b-BourLie4-6}
N.~Bourbaki, \emph{Groupes et alg{\`e}bres de {L}ie, ch.\,4, 5 et 6},
  {\'E}l{\'e}ments de math{\'e}matique, Hermann, Paris, 1968.

\bibitem{p-CH09-3}
M.~Cuntz and I.~Heckenberger, \emph{Finite {W}eyl groupoids of rank three}, in
  preparation (2009).

\bibitem{p-CH09-2}
\bysame, \emph{Weyl groupoids of rank two and continued fractions}, Algebra \&
  Number Theory \textbf{3} (2009), 317--340.

\bibitem{p-CH09}
\bysame, \emph{Weyl groupoids with at most three objects}, J. Pure Appl.
  Algebra \textbf{213} (2009), no.~6, 1112--1128.

\bibitem{FR07}
S.~Fomin and N.~Reading, \emph{Root systems and generalized associahedra},
  Geometric combinatorics, IAS/Park City Math. Ser., vol.~13, Amer. Math. Soc.,
  Providence, RI, 2007, pp.~63--131.

\bibitem{b-GP-00}
M.~Geck and G.~Pfeiffer, \emph{Characters of finite {C}oxeter groups and
  {I}wahori-{H}ecke algebras}, London Mathematical Society Monographs. New
  Series, vol.~21, The Clarendon Press Oxford University Press, New York, 2000.

\bibitem{p-G-09}
B.~Gr{\"u}nbaum, \emph{A catalogue of simplicial arrangements in the real
  projective plane}, Ars Math. Contemp. \textbf{2} (2009), no.~1, 1--25.

\bibitem{HW}
G.~H. Hardy and E.~M. Wright, \emph{An introduction to the theory of numbers},
  fourth ed., The Clarendon Press Oxford University Press, 1975.

\bibitem{p-H06}
I.~Heckenberger, \emph{The {W}eyl groupoid of a {N}ichols algebra of diagonal
  type}, Invent. Math. \textbf{164} (2006), 175--188.

\bibitem{p-HY-08}
I.~Heckenberger and H.~Yamane, \emph{A generalization of {C}oxeter groups, root
  systems, and {M}atsumoto's theorem}, Math. Z. \textbf{259} (2008), no.~2,
  255--276.

\bibitem{b-Kac90}
Victor~G. Kac, \emph{Infinite-dimensional {L}ie algebras}, third ed., Cambridge
  University Press, Cambridge, 1990.

\end{thebibliography}

\end{document}